\renewcommand{\subsubsection}[1]{\addtocounter{subsubsection}{1}
{\ \\[3pt]\bf \thesubsubsection. \  #1} }
\newtheorem{lem}[subsubsection]{Lemma}
\newtheorem{prp}[subsubsection]{Proposition}
\newtheorem{crl}[subsubsection]{Corollary}
\newtheorem{THM}[subsection]{Theorem}
\newtheorem*{Thm}{Theorem}
\theoremstyle{definition}
           \newtheorem{dfn}[subsubsection]{Definition}
\newcommand{\ad}{\mathrm{ad}}
\newcommand{\Ass}{\mathtt{Ass}}
\newcommand{\Lie}{\mathtt{Lie}}
\newcommand{\LBA}{\mathtt{LBA}}
\newcommand{\tB}{\mathtt{B}}
\newcommand{\tBr}{\mathtt{Br}}
\newcommand{\Coder}{\operatorname{Coder}}
\newcommand{\CM}{\mathtt{CM}}
\newcommand{\Com}{\mathtt{Com}}
\newcommand{\Der}{\operatorname{Der}}
\newcommand{\dg}{\mathtt{dg}}
\newcommand{\Ger}{\mathtt{G}}
\newcommand{\HKR}{\mathtt{HKR}}
\newcommand{\Hom}{\mathrm{Hom}} 
\newcommand{\LA}{\mathtt{LA}}
\newcommand{\LM}{\mathtt{LM}}
\newcommand{\LCM}{\mathtt{LCM}}
\newcommand{\rlarrows}{\stackrel{\rTo}{\lTo}}
\newcommand{\wt}{\widetilde}
\newcommand{\Alg}{\mathtt{Alg}}
\newcommand{\Coalg}{\mathtt{Coalg}}
\newcommand{\cO}{\mathcal{O}}
\newcommand{\cP}{\mathcal{P}}
\newcommand{\cQ}{\mathcal{Q}}
\newcommand{\fg}{\mathfrak{g}}
\newcommand{\fm}{\mathfrak{m}}
\newcommand{\Q}{\mathbb{Q}}
\begin{document}
\title[]{Noncommutative unfolding of hypersurface singularity}
\author{Vladimir Hinich}
\address{Department of Mathematics, University of Haifa,
Mount Carmel, Haifa 31905,  Israel}
\email{hinich@math.haifa.ac.il}
\author{Dan Lemberg}
\address{Department of Mathematics, University of Haifa,
Mount Carmel, Haifa 31905,  Israel}
\email{lembergdan@gmail.com}

\begin{abstract}
A version of Kontsevich formality theorem is proven for smooth DG algebras.
As an application of this, it is proven that any quasiclassical datum
of noncommutative unfolding of an isolated surface singularity can be quantized.
\end{abstract}
\maketitle

\section{Introduction}

An isolated hypersurface singularity is a polynomial $f\in k[x_1,\ldots,x_n]$
for which the Milnor number
$$ \mu(f)=\dim k[x_1,\ldots,x_n]/(\frac{\partial f}{\partial x_1},\ldots
\frac{\partial f}{\partial x_n})$$

is finite.

An unfolding of a hypersurface singularity is a family of hypersurface
singularities parametrized by an affine space. From algebraic point of view,
the description of unfoldings of $f$ is nothing but the problem of deformations
of the $k[y]$-algebra $k[x_1,\ldots,x_n]$, with the algebra structure defined
by the assignment $y=f(x_1,\ldots,x_n)$.

In this paper we suggest studying non-commutative unfoldings of
hypersurface singularities, that is deformations of $k[y]$-algebra
$A=k[x_1,\ldots,x_n]$, in the world of associative algebras.

In other words, we are interested in studying the Hochschild cochain
complex of $A$ considered as $k[y]$-algebra, having in mind Kontsevich
Formality theorem as a possible ideal answer.

Were $A$ smooth as $k[y]$-algebra, one could use the version of Formality
theorem proven in~\cite{dtt} which would provide a weak equivalence of
the Hochschild complex to the algebra of polyvector fields. Our case is
only slightly more general: $A$ is quasiisomorphic to a smooth dg algebra
over $k[y]$. Fortunately, the proof of Formality theorem presented
in~\cite{dtt} can be easily generalized to this setup. As a result, we
can replace the Hochschild cochain complex with a certain algebra of
polyvector fields (which in our case is also a dg algebra). 
This considerably simplifies the study of noncommutative unfoldings.

The paper consists of two parts. In the first part (Sections 2,3)
we prove the following version of Formality theorem for smooth dg algebras.

\begin{THM}\label{thm:formality}
Let $R\supset\Q$ be a commutative ring and let $A$ be a commutative smooth
dg $R$-algebra, that is non-positively graded, semifree over $A^0$ which is
smooth as $R$-algebra. Then the Hochschild
cochain complex of $A$ over $R$ is equivalent to the dg algebra of polyvector
fields as (homotopy) Gerstenhaber algebras.
\end{THM}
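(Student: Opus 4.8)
The plan is to follow the operadic proof of the formality theorem due to Tamarkin, in the form given in \cite{dtt}, and to carry the internal differential of $A$ through every step. The argument has three layers: (i) a DG version of the Hochschild--Kostant--Rosenberg isomorphism identifying the cohomology of the Hochschild complex with polyvector fields; (ii) a homotopy Gerstenhaber structure on the Hochschild complex coming from the action of the chains of the little $2$-disks operad; and (iii) an associator-based transfer, together with an obstruction argument, upgrading the HKR quasi-isomorphism to a $G_\infty$-quasi-isomorphism.

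First I would establish the DG analogue of HKR. Because $A$ is semifree over $A^0$ and $A^0$ is smooth over $R$, the cotangent complex of $A$ over $R$ is represented by the cofibrant (semifree) $A$-module $\Omega^1_{A/R}$, and the antisymmetrization map $\wedge^\bullet_A\Der_R(A)\to C^\bullet(A/R)$ is a quasi-isomorphism of complexes. The point requiring care is that the total differential on $C^\bullet(A/R)$ combines the Hochschild differential with the internal differential of $A$, while on the polyvector-field side the exterior algebra inherits a differential from that of $A$; one checks that the HKR map intertwines them. Smoothness enters by allowing one to replace, up to quasi-isomorphism, the bar resolution of $A$ by a Koszul-type resolution, reducing the computation of cohomology to the classical one.

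Next, following \cite{dtt}, the Hochschild cochains of the DG associative algebra $A$ carry an action of a chain model of the little $2$-disks operad, assembled from the brace operations and the cup product (Deligne's conjecture); these are given by the same formulas as in the ungraded case, the internal differential merely contributing to the total differential. Invoking the formality of the little disks operad, which holds over $\Q$ through a choice of Drinfeld associator and therefore over any $\Q$-algebra $R$, one transports this action to a $G_\infty$-structure on $C^\bullet(A/R)$. The polyvector fields $\wedge^\bullet_A\Der_R(A)$ are themselves a DG Gerstenhaber algebra, with wedge product, Schouten--Nijenhuis bracket, and the induced differential acting as a derivation of both. It then remains to lift the HKR quasi-isomorphism to a $G_\infty$-quasi-isomorphism between this DG Gerstenhaber algebra and $C^\bullet(A/R)$; as in \cite{dtt}, the successive obstructions lie in the Gerstenhaber (equivalently Harrison, or Andr\'e--Quillen) cohomology of the polyvector fields, which vanishes in the relevant degrees precisely because $A^0$ is smooth over $R$ and $A$ is semifree over $A^0$.

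I expect the main obstacle to be the careful bookkeeping of the internal differential throughout the operadic constructions. In \cite{dtt} the underlying objects are graded modules over a field, with no internal differential, so that the polyvector fields are literally the cohomology of the Hochschild complex and the theorem is strict formality. Here every object is a complex and the base $R$ is an arbitrary commutative $\Q$-algebra, so the target $\wedge^\bullet_A\Der_R(A)$ is not the cohomology but a nontrivial DG Gerstenhaber algebra, and the obstruction argument must compare two DG $G_\infty$-algebras with the same cohomology rather than prove formality over a graded object. The smoothness hypothesis should make the obstruction spaces vanish exactly as in the classical case, so once the DG-HKR identification is in place and the operadic machinery is set up relative to $R$, the remaining obstruction-theoretic step goes through essentially as in \cite{dtt}.
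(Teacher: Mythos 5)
Your steps (i) and (ii) --- the dg version of HKR and the homotopy Gerstenhaber structure on $C(A)$ assembled from the cup product, the braces and a choice of associator --- are in line with what the paper does. The gap is in step (iii). You propose to upgrade the HKR quasi-isomorphism to a $\Ger_\infty$-quasi-isomorphism by killing successive obstructions, which you locate in the Harrison (Gerstenhaber) cohomology of $S_A(T[-1])$, and you assert that these groups vanish ``because $A^0$ is smooth over $R$ and $A$ is semifree over $A^0$.'' This is Tamarkin's intrinsic-formality argument, and it is precisely the step that does \emph{not} carry over beyond $A=k[x_1,\ldots,x_n]$: for a general smooth algebra over a general base $R\supset\Q$ --- let alone a smooth dg algebra --- the relevant obstruction groups are not known to vanish and in general there is no reason for them to. This failure is the very reason the direct construction of \cite{dtt} exists, and you offer no argument for the vanishing; smoothness of $A^0$ and semifreeness of $A$ do not by themselves supply one. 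The difficulty is compounded in the dg setting, which you yourself point out: since $S_A(T[-1])$ is not the cohomology of $C(A)$, you would be running obstruction theory to compare two $\Ger_\infty$-structures that do not even live on the same graded object.

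What the paper does instead is avoid obstruction theory altogether by constructing the comparison map explicitly and using HKR only to verify that this explicit map is a quasi-isomorphism. The mechanism is the universal property of $S_A(T[-1])$: a $\Ger$-map out of it is determined by its restrictions to $A$ and to $T$. This is implemented on Lie bialgebra models: one forms $\xi(A)=f(B_{\CM^\perp}(A,T))[1]$ and produces weak equivalences of dg Lie bialgebras from $\xi(A)$ to $B'_{\Com^\perp}(S_A(T[-1]))$ and to $B'_{\Com^\perp}(C(A))$, the latter induced by the tautological maps $A\to C^0(A)$ and $T\to\Hom_R(A,A)=C^1(A)$. Even with the map in hand, deducing that it is a quasi-isomorphism from dg-HKR is the delicate part (Subsection~\ref{ss:kappa}): because $H(C(A))$ need not be generated by $H(A)$ and $H(T)$, one cannot argue on cohomology directly, and the paper must compare $F_{\Com_\infty}(A,T)\to C(A)$ with $F_{\Ass}(A,T)\to C(A)$ through a homotopy-commutative square of operads. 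To repair your proposal you would either have to restructure it along these lines or actually prove the vanishing of the obstruction groups you invoke.
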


Recall that for a smooth dg algebra $A$ the algebra of polyvector fields
is defined as $S_A(T[-1])$ where $T=\Der_R(A,A)$ is the $A$-module of 
$R$-derivations of $A$; it is cofibrant when $A$ is as indicated above.

The proof of the theorem is an adaptation
(and simplification) of the proof given in \cite{dtt}.

Since $S_A(T[-1])$ is a Gerstenhaber algebra, its Harrison chain complex 
\newline
$B_{\Com^\perp}(S_A(T[-1]))$ has a structure of dg Lie bialgebra. 
Homotopy Gerstenhaber algebra structure on the Hochschild complex $C(A)$
can be also described via a dg Lie bialgebra structure on 
$F^*_\Lie(C(A)[1])$, see \cite{h-tam}, 6.2 or Subsection~\ref{sss:btilde} below.

An equivalence between Gerstenhaber algerbas of polyvector fields $S_A(T[-1])$ 
and the Hochschild complex $C(A)$ is presented on the level of these 
Lie bialgebra models: we present a dg Lie bialgebra $\xi(A)$  an two weak 
equivalences $\xi(A)\to B_{\Com^\perp}(S_A(T[-1]))$ and
$\xi(A)\to F^*_{\Lie}(C(A))[1])$ of dg Lie bialgebras. The proof of the 
first weak equivalence is straightforward; the second weak equivalence is 
deduced from a dg version of Hochschild-Kostant-Rosenberg theorem; 
however, the setup of dg smooth algebras makes this deduction quite 
nontrivial; this part presented in Subsection~\ref{ss:kappa} is our main 
deviation from the proof of \cite{dtt}.

\subsection{}

In the second part of the paper we apply the formality theorem to studying
noncommutative unfoldings of hypersurface singularities. 

The famous consequence of Kontsevich formality theorem says that any Poisson
bracket on an affine space (or, more generally, on a $C^\infty$ manifold)
can be extended to a star-product.

Poisson bracket appears in this picture as a representative of the first-order
deformation extendable to a second-order deformation.

Having this in mind, we suggest the following
\begin{dfn}
A quasiclassical datum of quantization of a $B$-algebra $A$ is its
deformation over $k[h]/(h^2)$ extendable to $k[h]/(h^3)$.
\end{dfn}
Thus, a quasiclassical datum for a quantization of the ring of smooth functions
on a manifold is precisely a Poisson bracket on the manifold.

Let $f\in k[x_1,\ldots,x_n]$ define an isolated singular hypersurface
and let $W$ be a vector subspace of $k[x_1,\ldots,x_n]$ complement to 
the ideal $(\frac{\partial f}{\partial x_1},\ldots
\frac{\partial f}{\partial x_n})$.

We prove (see Proposition~\ref{prp:QC}) 
that the quasiclassical data for a NC unfolding of an 
isolated singularity $f\in k[x_1,\ldots,x_n]$ are given by pairs $(p,S)$
where $p\in W$ and $S$ is a Poisson vector field satisfying the condition
$[f,S]=0$.

The main result of the second part of the paper
is the following.

\begin{Thm}\label{thm:unfolding}(see Corollary~\ref{crl:unfolding})
Let $f\in k[x,y,z]$ define an isolated surface singularity. Then
any quasiclassical datum of NC unfolding of $f$ can can be quantized to a
noncommutative unfolding over $k[[h]]$.
\end{Thm}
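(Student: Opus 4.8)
The plan is to combine the Formality Theorem~\ref{thm:formality} with standard Maurer--Cartan obstruction theory, and then to carry out the obstruction computation explicitly in the three-variable case. By Theorem~\ref{thm:formality}, applied to a smooth dg model $B$ of the $k[y]$-algebra $A$ (for instance the Koszul resolution $B=k[x,y,z]\otimes\Lambda[\theta]$ with $\theta$ in degree $-1$ and $d\theta=y-f$), the Hochschild complex of $A$ over $k[y]$ is equivalent, as a homotopy Gerstenhaber algebra, to the polyvector fields $\mathfrak{g}=S_B(T[-1])$ with $T=\Der_{k[y]}(B)$, Schouten bracket, and internal differential $\delta=[\pi_0,-]$, where $\pi_0=(y-f)\partial_\theta$. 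Consequently an NC unfolding over $k[[h]]$ is the same datum as a Maurer--Cartan element $\Gamma=\sum_{n\ge 1}h^n\gamma_n$ of $\mathfrak{g}$, and quantizing a quasiclassical datum means extending a truncated solution to all orders.

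First I would translate Proposition~\ref{prp:QC} into this language: a quasiclassical datum $(p,S)$ is precisely a degree-two cocycle $\gamma_1\in\mathfrak{g}$ that lifts to order $h^2$, i.e.\ one for which there is $\gamma_2$ with $\delta\gamma_2+\tfrac12[\gamma_1,\gamma_1]=0$. Here the Poisson vector field $S$ (with $[f,S]=0$) contributes the bivector part of $\gamma_1$, built from the Jacobian Poisson structure $\pi_f=\sum_{\mathrm{cyc}}\partial_x f\,\partial_y\wedge\partial_z$ for which $f$ is a Casimir, while $p\in W$ contributes the term $p\,\partial_\theta$ representing a class in the Milnor algebra. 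Using the dg Hochschild--Kostant--Rosenberg computation from Subsection~\ref{ss:kappa}, I would identify the cohomology of $\mathfrak{g}$ through the tangent cohomology $H^0(T)=\{v:v(f)=0\}$ (vector fields annihilating $f$) and $H^1(T)=A/(\partial_x f,\partial_y f,\partial_z f)$ (the Milnor algebra), so that the degree-two part of $H(\mathfrak{g})$ is spanned by $\wedge^2 H^0(T)$ and $H^1(T)$, and the degree-three part by $\wedge^3 H^0(T)$ and $H^0(T)\otimes_A H^1(T)$.

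The inductive step is the usual one: given $\gamma_1,\dots,\gamma_{n-1}$ solving Maurer--Cartan modulo $h^n$, the next term $\gamma_n$ exists iff the obstruction class $[O_n]$, with $O_n=\tfrac12\sum_{i+j=n}[\gamma_i,\gamma_j]$, vanishes in $H^3(\mathfrak{g})$; that $O_n$ is a cocycle of degree three follows formally from the graded Jacobi identity. The quasiclassical hypothesis disposes of the case $n=2$, so the task reduces to showing $[O_n]=0$ for all $n\ge 3$.

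The main obstacle is exactly this vanishing, and it is where the surface hypothesis $n=3$ enters. In the classical smooth affine situation there is nothing to prove, because the differential is zero and a Poisson bivector is already a full Maurer--Cartan element; here the nonzero internal differential $\delta$ forces genuine higher corrections, so one must control the degree-three cohomology. I would show that the two possible homes of $[O_n]$ are torsion: $\wedge^3 H^0(T)$ is the third exterior power of the generically rank-two module $\{v:v(f)=0\}$, hence supported on the singular locus, and $H^0(T)\otimes_A H^1(T)$ is torsion because $H^1(T)$ is the finite-dimensional Milnor algebra. Using the Koszul resolution of the Milnor algebra afforded by the regular sequence $(\partial_x f,\partial_y f,\partial_z f)$ together with the Casimir identity $[\pi_f,f]=0$, I would construct a contracting homotopy (contraction against $\pi_f$) exhibiting every such cocycle as a Schouten coboundary, so that $[O_n]=0$. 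This is the computation special to three variables and is, I anticipate, the technical heart of the proof. With the obstructions gone, the induction produces $\Gamma\in\mathfrak{g}\otimes k[[h]]$, and transporting $\Gamma$ back across the formality equivalence yields the sought NC unfolding over $k[[h]]$, proving Corollary~\ref{crl:unfolding}.
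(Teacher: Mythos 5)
Your global strategy---use the formality theorem to reduce the problem to Maurer--Cartan theory in the polyvector fields of the Koszul resolution $P=B[x_1,x_2,x_3,e]$, $de=f-y$, and then extend order by order in $h$---is the same reduction the paper makes, and your translation of a quasiclassical datum into a second-order MC solution is correct. The gap is in the only step that actually requires work: the vanishing of the obstructions. Your identification of the cohomology of $\fg$ by a K\"unneth-type formula in $H^0(T)$ and $H^1(T)$ is incorrect. The complex $S_A(T[-1])$ with differential $\ad_{f\partial_e}$ splits as a direct sum, indexed by $m=k+j$, of truncated Koszul complexes $\wedge^mT_A\to\wedge^{m-1}T_A\,\partial_e\to\dots\to A\,\partial_e^m$ on the regular sequence $(\partial_1f,\partial_2f,\partial_3f)$; since Koszul homology of a regular sequence vanishes in positive degrees, each summand has cohomology only at its two ends. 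In particular your summand $H^0(T)\otimes_AH^1(T)$ (the middle spot $T_A\partial_e$ of the $m=2$ summand) contributes nothing, and the entire obstruction space in the relevant degree is $\ker\bigl(\wedge^3T_A\to\wedge^2T_A\bigr)=\mathrm{im}\bigl(\wedge^4T_A\to\wedge^3T_A\bigr)$, which in three variables is literally zero because $\wedge^4T_A=0$. So there is no ``torsion class'' to kill; and conversely, the argument you do offer---the obstruction module is torsion, hence a contracting homotopy against $\pi_f$ should exist---is not a proof even on its own terms, since a nonzero torsion class in cohomology would still obstruct. As written, the technical heart of your proof is a promissory note resting on a miscomputed cohomology group.

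For comparison, the paper does not run the order-by-order induction at all. It first shows, using exactness of the Koszul complex in positive degrees, that every MC element over an artinian (or pro-artinian) base has the form $w=p\,\partial_e+[f-p,T]$ with $T$ a trivector field, subject to the single remaining condition $[[f-p,T],[f-p,T]]=0$; it then observes that in three variables this condition is automatic, because $[[f-p,T],[f-p,T]]=\ad_{f-p}\bigl([T,[f-p,T]]\bigr)$ and $[T,[f-p,T]]$ is a four-vector field, hence zero. A quasiclassical datum $(p_1,S_1)$ is then quantized exactly, already over $k[h]$, by writing $S_1=[f,T_1]$ and taking $w=p_1h\,\partial_e+[f-p_1h,\,T_1h]$. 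If you prefer to keep your inductive framework, the repair is to replace your cohomology claim by the splitting computation above, after which the induction becomes trivial because the obstruction group vanishes identically; but you should then note that this vanishing is exactly the same four-vector phenomenon the paper exploits directly.
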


\subsection{Acknowledgement}

We are grateful to Martin Markl
\footnote{Martin did this some 8 years ago}
and Michel Van den Bergh for having pointed out
at an error in the first version of the manuscript. 
We are also very grateful to anonymous referee of PhD thesis of D.L. who found an error 
in the published version of the paper.

\section{Preliminaries}


Let $A$ be a commutative $k$-algebra and $T$ a Lie algebroid over $A$. 
Then the symmetric algebra $S_A(T[-1])$ has a natural structure of Gerstenhaber
algebra (in what follows $\Ger$-algebra): the commutative multiplication
is that of the symmetric algebra and the degree $-1$  Lie bracket is induced 
from the Lie bracket on $T$.

The $\Ger$-algebra $S_A(T[-1])$ satisfies an obvious universal property:
given a $\Ger$-algebra $X$, a map $\alpha:A\to X$ of commutative algebras and
a map $\beta:T\to X[1]$ of Lie algebras, so that $\beta$ is also a map of
modules over $\alpha$ and $\alpha$ is a map of $T$-modules via $\beta$,
there is a unique map of $\Ger$-algebras
$S_A(T[-1])\to X$.

Recall that the Hochschild cochain complex $C(A)$ has a $\Ger$-algebra
structure (however, in a  {\sl weak} sense, see \ref{ss:alg-hoch} below).

Thus, we may try using the above universal property to construct
a map $S_A(T[-1])\rTo C(A)$
\footnote{which has a good chance to be quasiisomorphism by 
Hochschild-Kostant-Rosenberg theorem.}
 of $\Ger$-algebras: the pair of obvious maps
\begin{eqnarray*}
A&=&C^0(A)\rTo C(A)\\
T&\rTo& \Hom_k(A,A)=C^1(A)\rTo C(A)[1]
\end{eqnarray*}
should satisfy all necessary properties.

This would give an exceptionally simple proof of Kontsevich formality theorem.

The main obstacle to this plan is that $C(A)$ is not a genuine $\Ger$-algebra;
it has only a structure of Gerstenhaber algebra up to homotopy. This 
obstacle can be, however, overcome, with a bit of Koszul duality
and a standard homotopy theory for colored operads. 

The proof presented below is a result of processing the proof
by Dolgushev, Tamarkin and Tsygan~\cite{dtt}. The main theorem of 
\cite{dtt} is generalized to smooth (non-positively graded) dg algebras
over a commutative $\Q$-algebra.

We have slightly streamlined the agrument working with dg Lie bialgebras 
instead of $\Ger^\perp$-coalgebras. On the other hand, the usage of HKR 
theorem has become more painful in our generalized context
of smooth dg algebras.

\subsection{Colored operads}

The operads appearing in this notes have more than one
\footnote{actually, two} color.
Colored operads were introduced in \cite{BV} back in 70-ies, but are much less 
in use than their colorless version.
 
A colored operad $\cO$ has a set of colors (denoted $[\cO]$) and a collection
of operations $\cO(c,d)$ for any finite collection of colors $c:I\to[\cO]$
and another color $d\in[\cO]$. There is an associative composition
of  operations, and unit elements in $\cO(\{c\},c)$ for all $c\in[\cO]$.

The results of \cite{haha} about model category structure for operads
and operad algebras in complexes extend easily to the colored setup.
In particular,
for $k\supset\Q$ and for any colored operad $\cO$ in complexes over $k$,
the category $\Alg_\cO$ of $\cO$-algebras has a model structure with 
quasiisomorphisms as
weak equivalences and componentwise surjections as fibrations. The category
of colored operads with a fixed collection of colors is itself the category
of algebras over a certain colored operad, therefore a model structure
on operads in characteristic zero.

A map $f:\cP\to\cQ$ of operads induces a forgetful functor
$f^*:\Alg_\cQ\rTo\Alg_\cP$ and its left adjoint 
$f_!:\Alg_\cP\rTo\Alg_\cQ$. This is a Quillen pair; it is a Quillen equivalence
if $f:\cP\to\cQ$ is a quasiisomorphism.

The above claims are proven for colorless operads in \cite{haha}.
Their colored versions can be found in \cite{strict} (paper in preparation).

\subsection{Koszul duality}

The material of this subsection is standard. The details can be found in
\cite{gk}, \cite{gj}, \cite{h-tam}, \cite{vdl}, \cite{LV}.

\subsubsection{}
Let us recall some standard notation connected to Koszul duality of operads.
Let $k$ be a field of characteristic zero. Let $\cO$ be a (possibly colored) 
Koszul operad in graded vector spaces over $k$ and $\cO^\perp$ the 
corresponding quadratic dual cooperad.

Any $\cO$-algebra $X$ gives rise to a differential in the cofree $\cO^\perp$
coalgebra $F^*_{\cO^\perp}(X)$. Dually, any $\cO^\perp$-coalgebra $Y$
defines a differential on the free $\cO$-algebra $F_\cO(Y)$. These assignments
define a pair of adjoint functors
\begin{equation}\label{eq:KD}
\Omega_{\cO}:\Coalg_{\cO^\perp}\rlarrows \Alg_{\cO}:B_{\cO^\perp}.
\end{equation}
A map of $\cO$-algebras is called weak equivalence if it is a quasiisomorphism.
A map of $\cO^\perp$-coalgebras is called  weak equivalence if the functor
$\Omega_{\cO}$ carries it to a quasiisomorphism.

The unit and the counit of the adjunction are weak equivalences; in particular,
the map $\Omega_\cO\circ B_{\cO^\perp}(A)\rTo A$ is a quasiisomorpism.

\subsubsection{$\cO_\infty$-algebras}

Here $\cO$ is a colored Koszul operad. By definition, an $\cO_\infty$-algebra
structure on a graded vector space $X$ is just a differential on the graded
$\cO^\perp$-coalgebra $F^*_{\cO^\perp}(X)$ converting it into a
dg $\cO^\perp$-coalgebra. This differential is defined by a collection of maps
$$ d_n:F^{*n}_{\cO^\perp}(X)\to X[1]$$
satisfying the condition expressing the property $d^2=0$.
The component $d_1$ yields a differential on $X$.

We use the notation $B_{\cO^\perp}(X)$ for the
differential graded $\cO^\perp$-coalgebra $(F^*_{\cO^\perp}(X),d)$.

Any dg $\cO$-algebra $X$ can be considered as a
$\cO_\infty$-algebra, so a canonical map

\begin{equation}
\cO_\infty\rTo\cO
\end{equation}
of dg operads is defined. It is a quasiisomorphism.

There are two different notions of morphism of $\cO_\infty$-algebras.
The first is just a morphism of algebras over the dg operad $\cO_\infty$.
This is a map of complexes $f:X\to Y$ preserving the $\cO_\infty$-algebra
structure. The second, more general, is called an $\cO_\infty$-morphism and
it is defined as a morphism $F:B_{\cO^\perp}(X)\to B_{\cO^\perp}(Y)$ of dg
coalgebras.

It is defined by its components $F_n:F^{*n}_{\cO^\perp}(X)\rTo Y$ satisfying some
quadratic identities. An $\cO_\infty$-morphism $F$ is called a weak equivalence
if it is a weak equivalence of the dg $\cO^\perp$-coalgebras. One can easily
check that $F$ is a weak equivalence iff $F_1:X\to Y$ is a quasiisomorphism.

If $A$ is an $\cO_\infty$-algebra, $B_{\cO^\perp}(A)$ is a dg $\cO^\perp$-coalgebra
and one has an $\cO_\infty$-weak equivalence
$$ A\rTo \Omega_\cO\circ B_{\cO^\perp}(A)$$
whose first component is a quasiisomorphism of complexes described,
if one forgets the differential, as the composition of
$A=F^{*1}_{\cO^\perp}(A)\to F^*_{\cO^\perp}(A)$ with
$B_{\cO^\perp}(A)=F^1_{\cO}(B_{\cO^\perp}(A))\to F_{\cO}(B_{\cO^\perp}(A)).$

\subsubsection{Examples}

The following operads are Koszul.
\begin{itemize}
\item $\cO=\Com,\Ass,\Lie$ with $\cO^\perp=\Lie^*\{1\},\Ass^*\{1\},\Com^*\{1\}$.
\item $\Ger$, the operad for Gerstenhaber algebras, with
$\Ger^\perp=\Ger^*\{2\}$.
\item $\cO=\CM$, the two-color operad governing pairs $(A,M)$ where $A$ is a
commutative algebra and $M$ is an $A$-module. Similarly, $\LM$ is the
two-color operad governing pairs $(L,M)$ where $L$ is a Lie algebra and $M$ is
an $L$-module. Both operads are Koszul with $\CM^\perp=\LM^*\{1\}$
and $\LM^\perp=\CM^*\{1\}$.
\item $\LA$, the two-color for Lie algebroids. An $\LA$-algebra is a pair
$(A,T)$ consisting of a commutative algebra $A$ and a Lie algebroid $T$.
The operations include, apart of the commutative multiplication on $A$ and
a Lie bracket on $T$, an $A$-module structure on $T$ and a $T$-module
structure on $A$. One has $\LA^\perp=\LA^*\{1\}$. $\LA$  is a Koszul operad
(see \cite{vdl}), but we will not use this fact.
\end{itemize}

\subsubsection{A slight generalization}

Let  $R$ be a commutative $k$-algebra. Given an operad $\cO$ over $k$,
it makes sense to talk about $\cO$-algebras with values in the category of 
complexes $\dg(R)$.
The category of such algebras is denoted $\Alg_{\cO}(\dg(R))$. 
If $\cO$ is Koszul,
one still has an adjoint pair
\begin{equation}
\Omega_\cO:\Coalg_{\cO^\perp}(\dg(R))\rlarrows\Alg_\cO(\dg(R)):B_{\cO^\perp},
\end{equation}
defined by the same formulas as for $R=k$ but using the symmetric monoidal
category $\dg(R)$ of complexes over $R$ instead of that over $k$.
The canonical map
$$ \Omega_\cO\circ B_{\cO^\perp}(A)\to A$$
is still a weak equivalence for each $A\in\Alg_{\cO}(\dg(R))$.

The notions of $\cO_\infty$-algebra and of $\cO_\infty$-morphism extend
without difficulty to algebras in $\dg(R)$.

\subsection{Algebra structure on Hochschild cochain complex}
\label{ss:alg-hoch}
\subsubsection{$\wt\tB$-algebras}
\label{sss:btilde}

Let $X$ be a $\Ger$-algebra. Then $X[1]$ has a Lie algebra structure, so that
$B_{\Com^\perp}(X)[1]$ which is $F^*_\Lie(X[1])$ considered as a graded vector
space,  acquires a dg Lie bialgebra structure.

Vice versa, any dg Lie bialgebra structure on $F^*_\Lie(X[1])$ gives rise to
a $\Ger_\infty$-structure on $X$.

This leads to definition of another dg operad $\wt\tB$ whose action on a
complex $X$ is given by a dg Lie bialgebra structure $F^*_\Lie(X[1])$
extending the standard Lie coalgebra structure and the differential on $X$.

Since any $\Ger$-algebra has a natural structure of $\wt\tB$-algebra, and any
$\wt\tB$-algebra structure on $X$ extends to a  $\Ger_\infty$-algebra, one has
a decomposition
\begin{equation}
\Ger_\infty\rTo\wt\tB\rTo\Ger,
\end{equation}
of the canonical map $\Ger_\infty\to\Ger$.

The $\wt\tB$-structure on $X$ is given by the collection of the following
operations:
\begin{itemize}\label{eq:elld-1}
\item $\ell_{m,n}:F^{*m}_\Lie(X[1])\otimes F^{*n}_\Lie(X[1])\to X[1]$,
\item $d_n:F^{*n}_\Lie(X[1])\to X[2]$,
\end{itemize}
defining the Lie bracket and the differential on $F^*_\Lie(X[1])$, subject
to certain relations which assure that $d^2=0$, $d$ is a derivation
of the bracket, and the cocycle condition connecting the bracket with the
cobracket.

Note for book-keeping the degrees of $\ell_{m,n}$ and $d_n$.
\begin{itemize}
\item $\ell_{m,n}:\Lie(m)^*\otimes\Lie(n)^*\rTo\wt\tB(m+n)^{1-m-n},\ m,n\geq 1$,
\item $d_n:\Lie(n)^*\rTo\wt\tB(n)^{2-n},\ n\geq 2$.
\end{itemize}

\subsubsection{Lie bialgebras versus $\Ger$-algebras}

The operad $\Ger$ is Koszul, so we have a standard Koszul duality pair of
adjoint functors

\begin{equation*}
\Omega_{\Ger}:\Coalg_{\Ger^\perp}\rlarrows \Alg_{\Ger}:B_{\Ger^\perp}.
\end{equation*}

There is another pair of adjoint functors, a sort of
``relative Koszul duality'', based on the fact expressed in \ref{sss:btilde}:
if $X\in\Alg_\Ger$, the dg Lie coalgebra $B_{\Com^\perp}(X)[1]$
has a structure of Lie bialgebra. Dually, given a dg Lie bialgebra $Y$,
the commutative algebra $\Omega_{\Com}(Y[-1])$ has a structure of $\Ger$-algebra.

This leads to the pair of adjoint functors
\begin{equation}
\Omega'_{\Com}:\LBA\rlarrows \Alg_{\Ger}:B'_{\Com^\perp},
\end{equation}
where $\LBA$ denotes the category of dg Lie bialgebras and
$$ B'_{\Com^\perp}(X)=B_{\Com^\perp}(X)[1]\textrm{ and }
\Omega'_{\Com}(Y)=\Omega_{\Com}(Y[-1]).$$

As for the conventional Koszul duality (\ref{eq:KD}) an arrow in $\LBA$
will be called a weak equivalence iff its image under $\Omega'_{\Com}$ is a
quasiisomorphism.

We use the same notation $B'_{\Com^\perp}$ for the obvious extension of the
functor to $\tilde\tB$-algebras.

\subsubsection{Deligne conjecture}
\label{ss:deligne-conjecture}

Deligne conjecture asserts that the cohomological Hochschild complex $C(A)$
of an associative algebra $A$ has a structure of an algebra over an operad
of (chains of) little squares. Even though Deligne conjecture is very
much relevant for the Formality theorem, the version we need is
extremely easy.

Define a $\tB_\infty$-algebra structure on a graded vector space $X$
as the structure of dg bialgebra on the free associative coalgebra
$F^*_{\Ass}(X[1])$. Similarly to $\tilde\tB$-algebras, this leads to a dg operad
$\tB_\infty$ governing such algebras. This operad is generated by the operations

\begin{itemize}\label{eq:b-infty}
\item $m_{p,q}:X^{\otimes p}\otimes X^{\otimes q}\rTo X[1-p-q]$, the components
of the product, and
\item $m_n:X^{\otimes n}\rTo X[2-n],$ the components of the differential,
\end{itemize}
defining the associative multiplication and the differential on 
$F^*_\Ass(X[1])$, subject
to certain relations which assure that $d^2=0$, $d$ is a derivation
of the bracket, and the condition describing compatibility of the product
with the coproduct.

The Hochschild complex $C(A)$ has a canonical $\tB_\infty$-algebra structure 
defined by the formulas:
\begin{itemize}
\item $m_2$ is the cup-product.
\item $m_k=0$ for $k>2$.
\item $m_{1,l}$ are the brace operations
 $x_0,\ldots,x_l\mapsto x_0\{x_1,\ldots,x_l\}$ having degree $-l$.
\item $m_{k,l}=0$ for $k>1$.
\end{itemize}

The associative cup-product together with the brace operations generate
an operad $\tBr$ called the operad of braces. Thus, the action of 
$\tB_\infty$ on $C(A)$ factors through $\tBr$ whose action on $C(A)$ is 
more or less tautological.

It turns out that the operad $\tBr$ is equivalent to the operad of small
squares, so the action of $\tBr$ on $C(A)$ described above ``solves''
Deligne conjecture.

What is much more important for us is that there exists a canonical
map of  operads $\wt\tB\rTo\tB_\infty$ (depending of a choice of associator)
so that any $\tB_\infty$-algebra is endowed with a canonical $\wt\tB$-algebra 
structure.
This remarkable result was proven by 
Tamarkin in his 1998 paper on Kontsevich formality theorem, 
see~\cite{tk}, \cite{tt}, \cite{h-tam}.
The proof is based on Etingof-Kazhdan theory of quantization 
(and dequantization) of Lie bialgebras.
\footnote{We have no doubt that the maps $\wt\tB\to\tB_\infty\to\tBr$
are quasiisomorphisms; unfortunately we were unable to find a reference for 
this fact.
}

\section{Equivalence of Lie bialgebra models}

In this section we are working in the symmetric monoidal category of
complexes over a commutative ring $R\supset\Q$.

$A$ is a smooth dg algebra over $R$ and $C=C(A)$ is the Hochschild cochain 
complex of $R$-algebra $A$. All operads considered will live over $\Q$; 
all our $\cO$-algebra will be in $\Alg_\cO(\dg(R))$.

\subsection{}
\label{ss:eq-intro}
According to the above, the Hochschild complex $C=C(A)$ admits a
$\tilde\tB$-algebra structure expressible (in a very nontrivial way)
via the cup product and the brace operations on $C$.

The corresponding  dg Lie bialgebra structure on $F^*_{\Lie}(C[1])$ is given by
the collection of maps $\ell_{m,n},\ d_n$ described in (\ref{eq:elld-1}),
and together they form a dg Lie bialgebra denoted $B'_{\Com^\perp}(C)$.

The algebra of polyvector fields $S_A(T[-1])$ is a (strict) $\Ger$-algebra,
so it leads to dg Lie bialgebra $B'_{\Com^\perp}(S_A(T[-1]))$. In order to prove
the main theorem, we will present a pair of weak equivalences

\begin{equation}
B'_{\Com^\perp}(S_A(T[-1])) \lTo^\iota \xi(A) \rTo^\varkappa B'_{\Com^\perp}(C(A))
\end{equation}
in the category of dg Lie bialgebras.

We will proceed as follows.

First of all we identify a dg Lie coalgebra $\xi(A)$ which naturally maps to
$B'_{\Com^\perp}(S_A(T[-1]))$. We can easily check the map is a weak equivalence,
it is injective, and that its image is closed with respect to Lie bracket.
This endows $\xi(A)$ with a structure of Lie bialgebra.

On the other hand, we will see that the pair of obvious embeddings 
$\alpha:A\to C$ and $\beta:T\to C[1]$ induces a map of dg Lie bialgebras
$\xi(A)\to B'_{\Com^\perp}(C(A))$. Finally, the fact that it is a weak equivalence
follows from Hochschild-Kostant-Rosenberg theorem.

\subsection{}

There is a pair of adjoint functors
\begin{equation}
F:\Alg_{\CM}\rlarrows\Alg_{\Com}:G
\end{equation}
defined by the formulas
$$ G(A)=(A,A[1]);\quad F(A,M)=S_A(M[-1]).
\footnote{
The symmetric algebra $S_A(M)=\bigoplus_{n\geq 0} S^n_A(M)$ makes sense even
if the commutative algebra $A$ has no unit. This is important as our operads
are non-unital.} 
$$
On the Koszul-dual side, there is a pair of adjoint functors
\begin{equation}
f:\Coalg_{\CM^\perp}\rlarrows\Coalg_{\Com^\perp}:g
\end{equation}
defined by the formulas
$$g(C)=(C,C[1]);\quad f(C,N)=C\oplus N[-1],$$
with the cobracket on $f(C,N)$ determined by the cobracket on $C$
and the coaction $\delta:N\to N\otimes C$ so that the value of the cobracket
at $x\in M[-1]\subset f(C,N)$ is $\delta(x)-\sigma\circ\delta(x)$
where $\sigma:C\otimes N[-1]\to N[-1]\otimes C$ is the standard commutativity
constraint.

The functors $G$ and $g$ commute with the Bar-construction, so that the
compositions $B_{\CM^\perp}\circ G$ and $g\circ B_{\Com^\perp}$ are naturally
isomorphic. This yields the composition
\begin{equation}\label{eq:baby}
f\circ B_{\CM^\perp}\rTo f\circ B_{\CM^\perp}\circ G\circ F=
f\circ g\circ B_{\Com^\perp}\circ F\rTo B_{\Com^\perp}\circ F.
\end{equation}

To get a feeling of what is going on, let $(A,M)\in\Alg_{\CM}$. The
composition $B_{\Com^\perp}\circ F$ applied to $(A,M)$ gives the (shifted)
dg Lie coalgebra Koszul dual to the commutative algebra $S_A(M[-1])$
which is graded by powers of $M$.  The composition
$f\circ B_{\CM^\perp}$ applied to $(A,M)$ is the dg subcoalgebra
of $B_{\CM^\perp}(S_A(M[-1]))$ consisting of the elements of degree $\leq 1$.

In particular, the map~(\ref{eq:baby}) is injective.

\subsection{}
We wish to apply the map of functors~(\ref{eq:baby}) to a Lie algebroid
$(A,T)$. The functor $F$ applied to a Lie algebroid, yields a $\Ger$-algebra,
so we upgrade it to the functor
$$ F':\Alg_{\LA}\to\Alg_{\Ger}.$$

In the diagram below we draw the categories and the functors described above.
The arrows denoted $\#$ are forgetful functors ($\#[-1]$ is the composition
of the forgetful functor with a shift).

The diagram looks more symmetric if one adds an extra vertex which we denote
$\LCM$. This is the category of $\CM^\perp$-coalgebras $X$ together
with a Lie bialgebra structure on $f(X)[1]$. One has a forgetful functor
$\LCM\to\Coalg_{\CM^\perp}$ and an obvious functor $f':\LCM\to\LBA$.
The functor $B'_{\CM^\perp}$ is defined later on, see Lemma~\ref{lem:xiisLBA}
and a discussion after it.

\begin{equation}\label{para}
\begin{diagram}
\Alg_{\LA}&& \rTo^{F'} && \Alg_{\Ger}&& \\
& \rdTo^{\#} &&& \uTo\vLine_{B'_{\Com^\perp}} & \rdTo^{\#} &\\
\dDashto^{B'_{\CM^\perp}}   &&\Alg_{\CM}&\pile{\rTo^F\\ \lTo_G} &\HonV&&\Alg_{\Com}\\
&&\uTo^{\Omega_\CM} \dTo_{B_{\CM^\perp}} &&\vLine^{\Omega'_\Com}\dTo&&\\
\LCM& \rDash & \VonH & \rDashto^{f'} &\LBA&&\uTo^{\Omega_\Com}\dTo_{B_{\Com^\perp}} \\
&     \rdDashto_{\#}      &&&& \rdTo_{\#[-1]} &\\
&&\Coalg_{\CM^\perp}&&\pile{\rTo^f\\\lTo_g}&&\Coalg_{\Com^\perp}\\
\end{diagram}
\end{equation}

Recall that the fuctors $G$ and $g$ (see the front face of the cube)
commute with the Bar-constructions, which leads to a canonical morphism
of functors
$$
f\circ B_{\CM^\perp}\rTo B_{\Com^\perp}\circ F.
$$

\

Let now $(A,T)$ be a Lie algebroid.
We define $\xi(A,T)=f(B_{\CM^\perp}(A,T))[1]$. By definition, this is a dg Lie
coalgebra and one has a canonical injective map
\begin{equation}
\iota: \xi(A,T)\rTo B_{\Com^\perp}(F(A,T))[1]=B'_{\Com^\perp}(F'(A,T)).
\end{equation}
Recall that $F'(A,T)=S_A(T[-1])$ is precisely what we need, so we have
constructed a dg Lie subcoalgebra $\xi(A,T)$. It is easy to check
(see Lemma~\ref{lem:xiisLBA} below) that $\xi(A,T)$ is also a Lie subalgebra,
so that $\xi(A,T)\in\LBA$. But even before doing this, let us check that
the embedding $\iota:\xi(A,T)\to B_{\Com^\perp}(F(A,T))[1]$ is a weak equivalence
of dg Lie coalgebras.

In fact, one has a weak equivalence
$\Omega_\CM\circ B_{\CM^\perp}(A,T)\to (A,T)$. Applying the functor $F$ and we
get a weak equivalence
\begin{equation}
F\circ\Omega_\CM\circ B_{\CM^\perp}(A,T)\to F(A,T).
\end{equation}
Since the map $\Omega_\Com\circ B_{\Com^\perp}(F(A,T))\to F(A,T)$ is also
a weak equivalence, the commutative diagram
\begin{equation}
\begin{diagram}
\Omega_\Com\circ f \circ B_{\CM^\perp}(A,T) & \rTo^{\Omega_\Com(\iota)} &
\Omega_\Com\circ B_{\Com^\perp}\circ F(A,T) \\
\dEqual & & \dTo \\
F\circ\Omega_\CM\circ B_{\CM^\perp}(A,T) &\rTo & F(A,T)
\end{diagram}
\end{equation}
asserts that $\iota$ is a weak equivalence in $\Coalg_{\Com^\perp}$.

\begin{lem}\label{lem:xiisLBA}
The image of $f(B_{\CM^\perp}(A,T))$ in $B'_{\Com^\perp}(F'(A,T))$
is a Lie subalgebra.
\end{lem}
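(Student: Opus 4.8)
The plan is to exploit the $T$-weight grading on $X=S_A(T[-1])=F'(A,T)$ and to show that, although the bracket need not preserve this grading, it lowers it by exactly one, which is enough to keep the relevant subspace closed. Recall from the discussion preceding the lemma that under the identification $B'_{\Com^\perp}(F'(A,T))=F^*_\Lie(S_A(T[-1])[1])$ the image of $\xi(A,T)=f(B_{\CM^\perp}(A,T))[1]$ is precisely the part spanned by elements of total $T$-weight $\leq 1$: the summand $B_{\Com^\perp}(A)[1]$ supplies the weight-$0$ part and the summand coming from $N[-1]$ in $f(C,N)=C\oplus N[-1]$ supplies the weight-$1$ part. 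Thus the lemma reduces to the assertion that the subspace of total $T$-weight $\leq 1$ is closed under the Lie bracket; since closure inside an ambient Lie algebra already yields a Lie subalgebra, this (together with the subcoalgebra property established above) will show $\xi(A,T)\in\LBA$.

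First I would record the two elementary weight properties of the $\Ger$-algebra structure on $X=S_A(T[-1])$. The commutative product is additive in $T$-weight, $S^p_A(T[-1])\cdot S^q_A(T[-1])\subseteq S^{p+q}_A(T[-1])$, whereas the Schouten bracket lowers total $T$-weight by one, $\{S^p_A(T[-1]),S^q_A(T[-1])\}\subseteq S^{p+q-1}_A(T[-1])$, because forming the bracket contracts exactly one derivation factor against $A$ or against another derivation. In particular both the internal differential (induced from $d_A$ and from the differential on $T=\Der_R(A,A)$) and the Harrison differential $d_2$ (induced from the product) preserve $T$-weight, so the $T$-weight grading is a grading of complexes and $\xi(A,T)$ is indeed a subcomplex.

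Next I would transport these properties to the Lie bialgebra $F^*_\Lie(X[1])$. The Lie cobracket is the cofree one, obtained by co-splitting a Lie word into two, and hence preserves total $T$-weight. The corestriction of the Lie bracket to the cogenerators $X[1]$ is the collection of operations $\ell_{m,n}$ of \ref{sss:btilde}; each of these is assembled, for the strict $\Ger$-algebra $X$, from the weight-preserving product and a single application of the Schouten bracket, so it lowers total $T$-weight by one and in no case raises it. The full bracket on $F^*_\Lie(X[1])$ is then recovered from this corestriction as a biderivation of the cofree Lie cobracket via the Lie-bialgebra cocycle identity; since the cobracket preserves weight and the corestriction lowers it by one, an induction on co-arity shows that the entire bracket lowers total $T$-weight by at least one. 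Granting this, the conclusion is immediate: if $x,y$ have $T$-weights $\leq 1$ then $[x,y]$ has weight $\leq 1+1-1=1$, and the boundary cases of weights $(0,0)$ and $(0,1)$ land in weight $\leq 0$, with $\{A,A\}=0$ forced by the absence of negative $T$-weight. Hence the image of $f(B_{\CM^\perp}(A,T))$ is closed under the bracket.

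I expect the one genuinely delicate point to be the propagation of the weight estimate from the corestriction $\ell_{m,n}$ to the whole bracket: one must confirm that reconstructing a Lie-bialgebra bracket on a cofree Lie coalgebra from its corestriction introduces no weight-raising terms, which means keeping careful track of the higher operations $\ell_{m,n}$ and of the signs in the cocycle condition relating $\ell_{m,n}$ with the cofree cobracket. An alternative packaging that would make this bookkeeping cleaner is to regard the $T$-weight as an auxiliary $\Z$-grading for which the commutative product is homogeneous of weight $0$ and the Lie bracket homogeneous of weight $-1$, and then to invoke functoriality of $B'_{\Com^\perp}$ to conclude that the induced bracket on $F^*_\Lie(X[1])$ is homogeneous of weight $-1$; in either formulation the weight-drop-by-one is the crux of the argument.
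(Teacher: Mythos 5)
Your proof is correct and is essentially the paper's own argument: introduce the $T$-weight grading on $S_A(T[-1])$, observe that the image of $f(B_{\CM^\perp}(A,T))$ is the part of weight $\leq 1$, and note that the Lie bracket on $F^*_\Lie(S_A(T[-1])[1])$ has weight $-1$. The delicate point you flag at the end is actually harmless here, since $S_A(T[-1])$ is a \emph{strict} $\Ger$-algebra, so the bracket on $F^*_\Lie(X[1])$ is just the extension of the Schouten bracket on $X[1]$ (only $\ell_{1,1}$ is nonzero) and the weight $-1$ homogeneity is immediate.
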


\begin{proof}[Proof of the lemma]
Denote $V=F'(A,T)=S_A(T[-1])=\oplus S_A^n(T[-1])$.
The Lie bialgebra $B'_{\Com^\perp}(V)$ as a graded space is just
$$ F^*_{\Lie}(V[1])=\bigoplus_{n\geq 1}(\Lie(n)^*\otimes V^{\otimes n}[n])^{S_n}.
$$
The Lie bracket on it is extended from the Lie bracket on $V[1]$. The space
$V$ is graded, and this grading induces a grading on  $F^*_{\Lie}(V[1])$.
The image of $f(B_{\CM^\perp}(A,T))$ consists of elements having degree $\leq 1$.
The Lie gracket has degree $-1$ with respect to this grading; therefore,
the image is closed with respect to the Lie bracket.
\end{proof}

\subsection{}\label{ss:2.3}

From now on $A$ is a smooth dg commutative algebra over $R\supset\Q$
and $T=\Der_R(A,A)$.
This means that $A^0$ is a smooth commutative $R$-algebra and the map
$A^0\to A$ is a finitely generated cofibration 
(that is, $A$ is generated as a graded $A^0$-algebra
by a finite number of free variables $x_i$ of negative degree).

In what follows we will write $\xi(A)$ instead of $\xi(A,T)$.
According to \ref{ss:eq-intro} the
complex $B'_{\Com^\perp}(C)=(F^*_{\Lie}(C[1]),d)$
has a structure of dg Lie bialgebra. We will present a Lie bialgebra map
$\varkappa:\xi(A)\to B'_{\Com^\perp}(C)$ and prove it is a weak equivalence.

Our plan is as follows. First of all we will present a map of dg Lie coalgebras
$\varkappa:\xi(A)\to B'_{\Com^\perp}(C)$, then we will check it is a Lie algebra
homomorphism, and after that we will check it is a weak equivalence of Lie
bialgebras.

To present a map of dg Lie coalgebras, it suffices to have a map
$$B_{\CM^\perp}(A,T)\rTo g(B'_{\Com^\perp}(C)).$$
The latter is given by a pair of maps $(\alpha,\beta)$ where
\begin{equation}
\alpha:A\rTo C
\end{equation}
is a map of $\Com_\infty$ algebras and
\begin{equation}
\beta:T\rTo C[1]
\end{equation}
is a map of $\Com_\infty$-modules over $\alpha$.
The maps are precisely the maps we were talking about from the very beginning,
$\alpha:A\to C^0(A)=A$ and $\beta:T\to\Hom(A,A)=C(A)[1]^0$.

To check that $\alpha$ induces a map of $\Com_\infty$-algebras,
we need to check that the map $\alpha:A\to C$ induces a map of the
Bar-constructions which commutes with the differentials.
This is equivalent to checking that the higher
components of the differential
$$ d_n:F^{*n}_{\Com^\perp}(C)\to C[1],\ n> 3,$$
vanish on $F^{*n}_{\Com^\perp}(A)$ and $d_2$ coincides with the multiplication
in $A$.

Similarly, in order to check that $\beta$ is a map of $\Com_\infty$-modules
over $\alpha$, one needs to verify that $d_n,\ n>2$ also vanish on
the part of $F^{*n}_{\Com^\perp}(C)$ having $n-1$ component $C^0$ and one
component $C^1$.

Both statements are independent of $A$; they are verified in Theorem 3 of
\cite{dtt}.

Thus, we already know that
$\varkappa:\xi(A)\to B'_{\Com^\perp}(C)$ is a map of dg
Lie coalgebras. To check it preserves the bracket, it suffices to
compose $\varkappa$ with the projection to cogenerators $C[1]$ of
$B'_{\Com^\perp}(C)$.
\footnote{
Let us explain the last point. The bracket map $X\otimes X\to X$ in a Lie
bialgebra is a coderivation, for an appropriate notion of coderivation
from a comodule to a Lie coalgebra. A composition of a coderivation
with a homomorphism of Lie coalgebras gives a coderivation.
We have to compare two coderivations into a cofree Lie coalgebra. It is
sufficient to compare their corestrictions on the cogenerators.
}

Since the projection itself is a Lie algebra homomorphism, we have to
verify that the composition
\begin{equation}\label{eq:comp}
\xi(A)\rTo^\varkappa B'_{\Com^\perp}(C)\rTo C[1]
\end{equation}
is a Lie algebra homomorphism.

We can forget about the differentials.
The inclusion $\xi(A)\to F^*_{\Com^\perp}(S_A(T[-1]))$ induces a grading on
$\xi(A)$; the composition $\xi(A)\to C[1]$ is zero on all components
except for degree $1$. Thus, it factors as
$$\xi(A)\to A\oplus T[-1] \to C[1],$$
so it remains to check that the obvious map
\begin{equation}\label{eq:ab}
A\oplus T[-1]\to C[1]
\end{equation}
preserves the Lie bracket.
This is obvious when $C[1]$ is endowed with the Gerstenhaber bracket.
The rest follows from
\begin{prp}
The $\Lie_\infty$-structure on C[1] defined by the $\wt\tB$-structure,
coincides with the (strict) Gerstenhaber bracket.
\end{prp}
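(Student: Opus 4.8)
The plan is to trace the $\wt\tB$-structure on $C=C(A)$ back to the brace $\tB_\infty$-structure (the cup product $m_2$ and the braces $m_{1,l}$) through Tamarkin's morphism $\wt\tB\to\tB_\infty$, and to read off from it the underlying $\Lie_\infty$-structure on $C[1]$. Recall that a $\wt\tB$-algebra is in particular a $\Ger_\infty$-algebra via $\Ger_\infty\to\wt\tB$, and restricting a $\Ger_\infty$-structure to its shifted-Lie part equips $C[1]$ with a $\Lie_\infty$-structure; this is the structure in question. In terms of the dg Lie bialgebra $F^*_\Lie(C[1])$, its binary bracket is the corestriction of $\ell_{1,1}$ to the cogenerators $C[1]$, while the higher brackets $\ell_k$ ($k\ge 3$) are controlled by the operations $\ell_{m,n}$ with $m+n\ge 3$ together with the differential components $d_n$. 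Thus the proposition splits into two assertions: that the binary bracket is the Gerstenhaber bracket, and that all higher brackets vanish.

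First I would settle the binary case. The $\wt\tB$-structure is produced by Etingof--Kazhdan dequantization applied to the dg bialgebra $F^*_\Ass(C[1])$ carrying the $\tB_\infty$-data, and its defining property (the semiclassical limit) is that the Lie bracket it places on the cogenerators is the antisymmetrization of the leading component of the associative product, namely of $m_{1,1}$, this leading term being independent of the chosen associator. Since $m_{1,1}(f;g)=f\{g\}$ is exactly Gerstenhaber's circle product and the Gerstenhaber bracket is by definition its commutator $f\{g\}-(-1)^{|f||g|}g\{f\}$, the binary bracket $\ell_2$ of the $\Lie_\infty$-structure coincides with the strict Gerstenhaber bracket on $C[1]$.

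The hard part will be the vanishing of the higher brackets $\ell_k$ for $k\ge 3$, which is exactly where the associator-dependent corrections \emph{a priori} enter. Here I would argue that Tamarkin's morphism, although it depends on the associator, restricts to a \emph{canonical} map on the suboperad governing the Lie operations: because the brace structure has $m_{k,l}=0$ for all $k\ge 2$ and $m_n=0$ for $n\ge 3$, the $\tB_\infty$-action on $C$ factors through the brace operad $\tBr$, in which the circle product $m_{1,1}$ is a pre-Lie product whose commutator is already a \emph{strict} graded Lie bracket. I would combine this with a weight count: equipping $C$ with the Hochschild-arity grading $w(C^n)=n-1$, the braces preserve $w$ while the internal differential and the cup product shift it in a fixed way, so that any multilinear operation $(C[1])^{\otimes k}\to C[1]$ genuinely produced by associator corrections would carry a weight incompatible with that of a $\Lie_\infty$-bracket; the only operations of the correct bidegree and weight with $k\ge 3$ cogenerator inputs are zero. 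This forces $\ell_k=0$ for $k\ge 3$ and completes the identification with the strict Gerstenhaber bracket. The main obstacle is precisely making the associator-independence of the Lie part rigorous; the cleanest formulation is to show that the composite $\Lie_\infty\to\wt\tB\to\tB_\infty$ factors through the strict map $\Lie\to\tBr$ given by the pre-Lie commutator, after which both the strictness and the value of the bracket follow at once.
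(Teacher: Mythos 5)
The paper does not actually prove this proposition: it only observes that the claim is independent of $A$ and cites it as Theorem~2 of \cite{dtt}, whose proof is exactly the argument you sketch --- identification of the binary term via the semiclassical property of the Etingof--Kazhdan dequantization, plus a degree count in the brace operad killing the higher brackets --- so your proposal is correct and follows the same route as the cited source. The one refinement I would suggest is to run the weight count operadically rather than on $C(A)$ (where, for a dg algebra $A$, the arity grading and the cohomological grading are independent): a composite in $\tBr(k)$ built from $a$ cup products and braces has cohomological degree $a-(k-1)\geq 1-k$, whereas $\ell_k$ would have to live in degree $3-2k<1-k$ for $k\geq 3$, which gives the vanishing at once and, for $k=2$, confines $\ell_2$ to the span of $m_{1,1}$ and its transpose.
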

\begin{proof}
The claim is independent of $A$ and is precisely Theorem 2 of \cite{dtt}.
\end{proof}

The map $\varkappa:\xi(A)\to B'_{\Com^\perp}(C)$ is therefore a map of
Lie bialgebras.

\subsection{$\varkappa$ is a weak equivalence}\label{ss:kappa}

We have to verify that $\varkappa:\xi(A)\to B'_{\Com^\perp}(C)$
is a weak equivalence, that is that the map
\begin{equation}\label{eq:Omegakappa}
\Omega_\Com(\varkappa):
F\circ\Omega_{\CM}\circ B_{\CM^\perp}(A,T)=
\Omega_{\Com}\circ f\circ B_{\CM^\perp}(A,T)\to
\Omega_{\Com}\circ B_{\Com^\perp}(C)
\end{equation}
is a quasiisomorphism. We will deduce this from a dg version of
Hochschild-Kostant-Rosenberg (HKR) theorem for smooth dg algebras.

Let $A$ be a smooth commutative dg algebra over $R\supset\Q$. According to~\cite{l}, 5.4.5.1, the homological
HKR map $C_\bullet(A,A)\rTo S_A(\Omega_A[1])$ is a quasiisomorphism. This is a map
of cofibrant $A$-modules, so it induces a quasiisomorphism of complexes
\begin{equation}\label{eq:HKR}
\HKR:S_A(T[-1])\rTo C=\Hom(C_\bullet(A,A),A).
\end{equation}
Note that the cohomology of $\HKR$ is compatible with the Gerstenhaber 
structures.

This immediately implies $\Omega_\Com(\varkappa)$ is a quasiisomorpism
in the case $A$ has trivial differential, for instance,
when $A$ is a (conventional) smooth algebra.
In fact, the map (\ref{eq:Omegakappa}) induces in cohomology a
commutative algebra homomorphism from $S_A(T[-1])$ to $H(C)$ which coincides
with $\HKR$ on $A$ and on $T$. Then by HKR theorem it is a quasiisomorphism.

In general, $H(C)$ needs not be generated by $H(T)$ over $H(A)$, so the 
above reasoning does not work. But, as it turns out, it can be easily fixed
using the notion of $H$-commutative algebra explained below.

\subsubsection{$H$-commutative algebras}

An $H$-commutative algebra over $R$ is just a commutative algebra in the derived 
category $D(R)$. \footnote{The term should remind $H$-spaces in topology.}

The homotopy theory of operads and of operad algebras teaches us that the notion of
algebra in the derived category is meaningless and should be replaced with a notion
of algebra over an operad equivalent to $\Com$. But what we are doing here is just the 
opposite.

A map of $H$-commutative algebras is just a map in $D(R)$ preserving the multiplication.

Any $\Com_\infty$-algebra $A$ has a canonical $H$-commutative algebra structure
given by the degree zero binary operation. Any $\Com_\infty$ map $f:A\to B$ of 
$\Com_\infty$ algebras, gives rise to a map of $H$-commutative algebras --- it is given 
by the degree zero operation $f_1:A\to B$ which is a map of complexes and commutes with 
the multiplication in $A$ and $B$ up to homotopy.

The Hochschild cochain complex $C$ has an $H$-commutative structure determined 
by the cup-product which is commutative up to homotopy. The same $H$-commutative algebra
structure is defined on $C$ by the choice of morphism $\wt B\to \tBr$:
the symmetrization of cup-product is the only degree zero commutative binary operation 
in $\tBr$, up to a constant, which has to be equal to $1$ as the morphism
$\wt B\to\tBr$ induces the standard Gerstenhaber algebra structure on the cohomology.

The Hochschild-Kostant-Rosenberg map (\ref{eq:HKR}) induces an isomorphism of $H$-
commutative algebras.

We claim that the map $\Omega_\Com(\varkappa):\Omega_\Com(\xi)\to\Omega_\Com\circ 
B_{\Com^\perp}(C)$ induces an isomorphic map of $H$-commutative algebras.

This will imply that $\varkappa$ is a weak equivalence.

\

Look at the diagram 
\begin{equation}
\begin{diagram}
F\circ\Omega_{\CM}\circ B_{\CM^\perp}(A,T)&=&
\Omega_{\Com}\circ f\circ B_{\CM^\perp}(A,T)&\rTo^{\Omega_\Com(\varkappa)}&
\Omega_{\Com}\circ B_{\Com^\perp}(C)\\
\dTo_\sim &&&& \uTo_\sim\\
F(A,T) && \rTo^{\HKR}&& C
\end{diagram},
\end{equation}
where the left vertical equivalence is induced by the counit of the adjunction
$\Omega_\CM\circ B_{\CM^\perp}(A,T)\to (A,T)$, and the right vertical map
is the $\Com_\infty$-equivalence induced by the unit of adjunction.

The diagram defines two maps of $H$-commutative algebras from $F(A,T)=S_A(T[-1])\to C$. The restrictions of these two maps to $A$ and $T$ coincide.

Since $S_A(T[-1])$ is generated by $T$ over $A$ as an $H$-commutative algebra, two maps coincide in $D(R)$.

\section{Application: non-commutative unfolding}

\subsection{}
Let $k$ be a field of characteristic zero and let $f$ be a polynomial
in $A=k[x_1\ldots,x_n]$. We put $B=k[y]$ and we define a $B$-algebra
structure on $A$ via $y=f(x_1,\ldots,x_n)$.

Denote $P=B[x_1,\ldots,x_n,e]$ the semifree $B$-algebra generated by $x_i$ in
degree $0$, $e$ in degree $-1$, with the differential defined as
$$de=f(x_1,\ldots,x_n)-y.$$
The obvious projection $\pi:P\to A$ carrying $x_i$ to $x_i$ and $e$ to $0$,
is a quasiisomorphism. It is split as a homomorphism of $k$-algebras
by $\iota:A\to P$ defined by $\iota(x_i)=x_i$.

\begin{lem}$A$ is free as $B$-module.
\end{lem}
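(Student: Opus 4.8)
The plan is to realize the ring extension $B=k[f]\subseteq A=k[x_1,\ldots,x_n]$ as a composite of two extensions, each of which is visibly free, and then to conclude by transitivity of freeness. Throughout, I will use that freeness is unaffected by applying a $k$-algebra automorphism $\sigma$ of $A$, since $\sigma$ carries $k[f]$ isomorphically onto $k[\sigma(f)]$ and transports $B$-module structures; this lets me put $f$ into a convenient normal form before doing anything else.

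First I would reduce to the case where $f$ is monic, up to a scalar, in the last variable. Write $d=\deg f$ and let $f_d$ denote its top-degree homogeneous component, which is nonzero. Since $\operatorname{char}k=0$, the field $k$ is infinite, so a generic linear change of coordinates arranges that the coefficient of $x_n^d$ in $f_d$ is nonzero. After such a change, the coefficient of $x_n^d$ in $f$ is a nonzero constant $c$, while every other monomial of $f$ has $x_n$-degree at most $d-1$; hence $f=c\,x_n^d+\sum_{j<d}b_j\,x_n^j$ with $b_j\in R:=k[x_1,\ldots,x_{n-1}]$. Replacing $f$ by $c^{-1}f$, which leaves the subring $k[f]$ unchanged, I may assume $f$ is monic of degree $d$ as a polynomial in $x_n$ over $R$.

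Next I would exhibit the two free extensions $k[f]\subseteq R[f]\subseteq A$. On one hand, since $f$ has positive degree in $x_n$ it is transcendental over $R$, so $R[f]\cong R\otimes_k k[f]$ is a polynomial ring over $R$ in the single variable $f$; in particular $R[f]$ is free over $k[f]$ with basis the monomials in $x_1,\ldots,x_{n-1}$. On the other hand, because $f$ is monic in $x_n$, iterated Euclidean division by $f$ yields for every $g\in A=R[x_n]$ a unique finite expansion $g=\sum_{l=0}^{d-1}p_l\,x_n^l$ with $p_l\in R[f]$; equivalently, the family $\{f^{\,j}x_n^l: j\ge 0,\ 0\le l<d\}$ is unitriangular with respect to the $x_n$-degree filtration and so is an $R$-basis of $A$. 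This shows $A$ is free over $R[f]$ with basis $1,x_n,\ldots,x_n^{d-1}$. Composing the two, $A$ is free over $k[f]=B$, with explicit basis $\{x_1^{a_1}\cdots x_{n-1}^{a_{n-1}}x_n^l:\ a_i\ge 0,\ 0\le l<d\}$.

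I expect the only genuine obstacle to be the normalization step: one must check carefully that a generic linear change of coordinates really does make $f$ monic in $x_n$ (this is the sole place where $\operatorname{char}k=0$, through the infinitude of $k$, is used) and that rescaling by $c^{-1}$ leaves $k[f]$ intact. Once $f$ is monic the division-algorithm argument and the transcendence of $f$ over $R$ are routine, and transitivity of freeness finishes the proof. I note that this argument uses only that $f$ is non-constant, and not the isolated-singularity hypothesis, which enters the paper elsewhere.
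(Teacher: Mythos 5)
Your proof is correct and follows essentially the same route as the paper: normalize $f$ to be monic in $x_n$ by an automorphism of $A$, then deduce that $A$ is free over $k[x_1,\ldots,x_{n-1},f]$ and hence over $k[f]$. The only differences are cosmetic --- the paper uses the substitution $x_i\mapsto x_i+x_n^{N_i}$ rather than a generic linear change (so it does not need $k$ infinite) and invokes regularity of the sequence $x_1,\ldots,x_{n-1},f$ where you exhibit the explicit basis $\{x_1^{a_1}\cdots x_{n-1}^{a_{n-1}}x_n^l\}$ by Euclidean division; your remark that non-constancy of $f$ is the real hypothesis is a fair point left implicit in the paper.
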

\begin{proof}
It is a standard fact that there is an automorphism of $A$ given by the formulas
\begin{equation}
x_i\mapsto x_i+x_n^{N_i},\quad x_n\mapsto x_n,
\end{equation}
for suitable $N_i$, such that the image of $f$ is a monic polynomial in $x_n$
with coefficients in $k[x_1,\ldots,x_{n-1}]$. This allows one to assume, 
without loss of generality, that $f$ is monic in $x_n$. 
In this case the sequence of elements in $A$
$$ x_1,\ldots,x_{n-1},f$$
is regular and $A$ is free over $k[x_1,\ldots,x_{n-1},f]$. This implies
that $A$ is also free as $B=k[f]$-module.
\end{proof}

\subsection{Comparison of Hochschild complexes}

Let $k$ be a commutative ring, $A$ and $A'$ two dg $k$-algebras cofibrant
as complexes over $k$. We are going to show that if $A$ and $A'$ are 
quasiisomorphic, then their Hochschild cochain complexes are 
quasiisomorphic as dg Lie algebras.

Note that according to a deep result of Keller \cite{ke} the Hochschild cochain
complexes $C(A)$ and $C(A')$ should be equivalent as $B_\infty$-algebras.
We present below a much more elementary result so as not to be compelled
to extend \cite{ke} to dg setup.

Let $A$ be a dg algebra over $k$. The Hochschild cochain complex $C(A)$
can be defined as follows.

We endow a unital cofree dg associative coalgebra
$A^\vee=\oplus_{n\geq 0} A^{\otimes n}[n]$ with a differential encoding
the differential and multiplication in $A$.
The (graded) coderivations of $A^\vee$ form a dg Lie algebra which is
precisely $C(A)[1]$.

Let $f:A\to A'$ be a surjective quasiisomorphism of dg algebras over $k$.
Assume furthermore that both $A$ and $A'$ are cofibrant as complexes over $k$.

Let us show that the Hochschild complexes $C(A)$ and $C(A')$ are equivalent
as dg Lie algebras.

The map $f$ induces a map of dg coalgebras
$$ f^\vee:A^\vee\rTo A^{\prime\vee}.$$
This yields the pair of maps $\phi$ and $\psi$ in the diagram
\begin{equation}
\begin{diagram}
X & \rDashto^{\psi'} & \Coder(A^\vee) \\
\dDashto^{\phi'} & & \dTo^\phi \\
\Coder(A^{\prime\vee}) & \rTo^\psi & \Coder^f(A^\vee,A^{\prime\vee}),
\end{diagram}
\end{equation}
where $\Coder^f$ is the collection of maps $\delta:A^\vee\to A^{\prime\vee}$
satisfying the condition
$$ \Delta\circ\delta=(\delta\otimes f+f\otimes\delta)\circ\Delta.$$
Define $X$ by the cartesian diagram above. Then $X$ inherits the dg Lie algebra
structure. The maps $\phi$ and $\psi$ are both quasiisomorphisms and $\phi$
is surjective, so the maps $\phi'$ and $\psi'$ are quasiisomorphism of dg
Lie algebras.

\begin{crl}
Let $A$ and $A'$ be two dg algebras over $k$ which are cofibrant as complexes.
If $A$ and $A'$ are quasiisomorphic, their Hochschild complexes $C(A)[1]$
and $C(A')[1]$ are quasiisomorphic as dg Lie algebras.
\end{crl}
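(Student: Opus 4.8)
The plan is to bootstrap from the case already settled in the body of this subsection, where a \emph{surjective} quasiisomorphism $f\colon A\to A'$ of dg algebras cofibrant as complexes was shown to induce a span of dg Lie algebra quasiisomorphisms $C(A)[1]\leftarrow X\to C(A')[1]$ through the twisted coderivation complex $\Coder^f$. Since ``quasiisomorphic'' means joined by a finite zigzag of quasiisomorphisms, and since ``quasiisomorphic as dg Lie algebras'' is likewise the equivalence relation generated by dg Lie algebra quasiisomorphisms, it suffices to treat a single quasiisomorphism $g\colon A\to A'$ and, for it, to reduce to the surjective case. Concretely, I would produce a dg algebra $\tilde A$, \emph{cofibrant as a complex}, together with two \emph{surjective} quasiisomorphisms $\tilde A\to A$ and $\tilde A\to A'$; applying the body's result to each leg and concatenating the resulting spans then exhibits $C(A)[1]$ and $C(A')[1]$ as quasiisomorphic dg Lie algebras, and running this over every edge of the zigzag finishes the proof.

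To build the common cover $\tilde A$ I would use the model structure on dg $k$-algebras, in which weak equivalences are quasiisomorphisms, fibrations are surjections, and every object is fibrant. First choose semifree resolutions $RA\to A$ and $RA'\to A'$; these are surjective quasiisomorphisms whose sources are cofibrant as dg algebras, hence cofibrant as complexes. As $RA$ is cofibrant and $RA'\to A'$ is a trivial fibration, the composite $RA\to A\xrightarrow{g}A'$ lifts to an algebra map $u\colon RA\to RA'$, which is a quasiisomorphism by two-out-of-three. Factoring $u$ as a trivial cofibration $i\colon RA\to Z$ followed by a trivial fibration $q\colon Z\to RA'$ produces a cofibrant $Z$ with a surjective quasiisomorphism onto $RA'$; and since every object is fibrant, the trivial cofibration $i$ admits a retraction $r\colon Z\to RA$, which is automatically surjective and again a quasiisomorphism. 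Thus $Z$ carries surjective quasiisomorphisms onto both $RA$ and $RA'$, and the chain
$$C(A)[1]\simeq C(RA)[1]\simeq C(Z)[1]\simeq C(RA')[1]\simeq C(A')[1]$$
of dg Lie algebra equivalences, each produced by the surjective case, is exactly what we want. When $\Q\subset k$ one may shortcut this by taking $\tilde A$ to be the pullback of $g$ against the evaluation $A'[t,dt]\to A'$, whose two endpoint evaluations furnish the required surjective quasiisomorphisms.

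The only real subtlety — everything else being formal homotopical algebra — is to keep the apex of the cover cofibrant \emph{as a complex over $k$}, since this is precisely the hypothesis under which the body's construction of $X$ and the quasiisomorphy of $\phi'$ and $\psi'$ were verified. This is exactly why I resolve by \emph{semifree} dg algebras, whose underlying graded $k$-modules are free and hence cofibrant, rather than by arbitrary cofibrant-as-complex objects, and why the factorization $Z$, being cofibrant as a dg algebra, is again cofibrant as a complex. With this cofibrancy in hand each surjective leg falls under the already-proven case, and the conclusion follows by transitivity of the quasiisomorphism relation on dg Lie algebras.
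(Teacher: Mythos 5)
Your proposal is correct and follows essentially the same route as the paper: the paper's proof is the single assertion that any two quasiisomorphic algebras admit a common cofibrant cover with surjective quasiisomorphisms onto each (cofibrant as an algebra, hence as a complex), after which the surjective case already established in the subsection applies to each leg. Your argument is simply a detailed implementation of that one sentence, including the correct observation that the apex must be kept cofibrant as a complex.
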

\begin{proof}
Any pair of quasiisomorphic algebras can be connected by a pair of
surjective quasiisomorphisms from a cofibrant algebra which  is automatically
cofibrant as a complex of $k$-modules.
\end{proof}

\subsection{ }

We are now back to our unfoldings.
According to the above, the dg Lie algebra governing deformations of
$B$-algebra $A$, is the algebra of polyvector fields $S_P(T_P[-1])[1]$
where $T_P=\Der_B(P)$. In a more detail, $T_P$ is is a $P$-module freely
generated by the elements $\partial_i=\frac{\partial}{\partial x_i}$ of
degee $0$
and the element $\partial_e=\frac{\partial}{\partial e}$ of degree $1$,
with the differentials given by the formula
\begin{equation}\label{eq:diff}
d(\partial_e)=0;\quad
d(\partial_i)=\frac{\partial f}{\partial x_i}\partial_e.
\end{equation}
It is convenient to compare $T_P$ with a dg Lie algebroid $T$ over $A$
generated by the same
 $\partial_i$ and $\partial_e$
over $A$, with the differential given by (\ref{eq:diff}).

Note the following
\begin{lem}\label{lem:inner}
The differential in $T$ is inner, given by the formula
$$ d(x)=-[f\partial_e,x].$$
\end{lem}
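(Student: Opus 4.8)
The plan is to prove the identity $d(x) = -[f\partial_e, x]$ by checking it on the free $A$-module generators $\partial_1,\ldots,\partial_n,\partial_e$ of $T$ and then extending to all of $T$ by $A$-linearity. First I would record the bookkeeping: $f\partial_e$ is an element of $T$ sitting in degree $1$, so the operator $\ad_{f\partial_e} = [f\partial_e, -]$ has degree $+1$, exactly the degree of $d$. Both are then degree-$1$ operators on $T$ (indeed both are derivations of the Lie bracket, $d$ by the dg Lie algebroid axioms and $\ad_{f\partial_e}$ by the graded Jacobi identity), so the comparison is well posed.

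The extension from generators to all of $T$ rests on the observation that both $d$ and $\ad_{f\partial_e}$ are $A$-linear. Since $A = k[x_1,\ldots,x_n]$ is concentrated in degree $0$ and carries the trivial differential, one has $d(ax) = a\,d(x)$ for $a\in A$ and $x\in T$. On the other side, the anchor of $\partial_e$ vanishes: as $e$ does not occur in $A$, the derivation $\partial_e$ kills $A$, so $\rho(\partial_e) = 0$ and hence $\rho(f\partial_e) = f\rho(\partial_e) = 0$. The Leibniz rule for the algebroid bracket then yields $[f\partial_e, ax] = \rho(f\partial_e)(a)\,x + a[f\partial_e, x] = a[f\partial_e, x]$, so $\ad_{f\partial_e}$ is $A$-linear as well. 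As $T$ is free over $A$ on the listed generators, it therefore suffices to verify the identity on each generator.

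The generator computation is a short application of graded antisymmetry and the Leibniz rule. For $\partial_i$ one finds
$$[f\partial_e, \partial_i] = -[\partial_i, f\partial_e] = -\Bigl(\frac{\partial f}{\partial x_i}\,\partial_e + f[\partial_i, \partial_e]\Bigr) = -\frac{\partial f}{\partial x_i}\,\partial_e,$$
using $[\partial_i,\partial_e] = 0$, so that $-[f\partial_e, \partial_i] = \frac{\partial f}{\partial x_i}\,\partial_e = d(\partial_i)$, in agreement with~(\ref{eq:diff}). For $\partial_e$, antisymmetry gives $[f\partial_e,\partial_e] = [\partial_e, f\partial_e] = \rho(\partial_e)(f)\,\partial_e + f[\partial_e,\partial_e]$; here $\rho(\partial_e) = 0$ and $[\partial_e,\partial_e] = 2\partial_e^2 = 0$, because $e$ is an odd variable with $e^2 = 0$, whence $[f\partial_e,\partial_e] = 0 = -d(\partial_e)$. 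Combined with the $A$-linearity above, this establishes $d = -\ad_{f\partial_e}$ on all of $T$.

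I expect no genuine obstacle here; the only points that demand care are the sign bookkeeping in the graded antisymmetry and Leibniz rules, and the vanishing $[\partial_e,\partial_e] = 2\partial_e^2 = 0$, which uses that $\partial_e$ is an odd derivation of the algebra freely generated by the degree $-1$ variable $e$.
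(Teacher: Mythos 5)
Your proof is correct: the paper itself gives no argument for this lemma (it is stated with an immediate \qed), and your direct verification on the $A$-module generators $\partial_i$, $\partial_e$, together with the observation that both $d$ and $\ad_{f\partial_e}$ are $A$-linear because $\rho(\partial_e)=0$ and $d|_A=0$, is exactly the routine check the authors leave to the reader. The sign bookkeeping and the vanishing $[\partial_e,\partial_e]=2\partial_e^2=0$ are handled correctly.
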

\qed

The Lie algebroid $T_P$ can be described via $T$ as follows.
\begin{lem}Let $P$ be a commutative (dg) $A$-algebra, $T$ a Lie algebroid
over $A$ and let a map of Lie algebras and left $A$-modules
$\alpha:T\to\Der(P)$ makes commutative the following diagram
\begin{equation}
\begin{diagram}
T & \rTo^\alpha & \Der(P) \\
\dTo & & \dTo \\
\Der(A) & \rTo & \Der(A,P),
\end{diagram}
\end{equation}
where the maps to $\Der(A,P)$ are defined via composition with the
algebra map $A\to P$.
Then $\alpha$ uniquely defines a structure of $P$-Lie algebroid on
$P\otimes_AT$ and a map $(A,T)\to (P,P\otimes_AT)$ in $\Alg_{\LA}$.
\end{lem}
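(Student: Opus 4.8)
The plan is to write down the only possible formulas for the anchor and the bracket on $P\otimes_A T$, verify that they descend to the relative tensor product, and then check the Lie algebroid axioms; uniqueness will be manifest from the fact that the formulas are forced. Throughout, write $\rho:T\to\Der(A)$ for the anchor of $T$ (the left vertical map of the diagram) and $\phi:A\to P$ for the structure map. The hypotheses say that $\alpha$ is $A$-linear, that $\alpha$ is a homomorphism of (dg) Lie algebras, and that the square commutes, i.e. $\alpha(t)\circ\phi=\phi\circ\rho(t)$ as elements of $\Der(A,P)$ for every $t\in T$.

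First I would define the anchor of $P\otimes_A T$ as the $P$-linear extension of $\alpha$,
\[
\tilde\alpha(p\otimes t)=p\,\alpha(t)\in\Der(P).
\]
This is well defined on $P\otimes_A T$ precisely because $\alpha$ is $A$-linear and $P$ acts on $\Der(P)$ through $\phi$. Next, the Leibniz rule for a $P$-Lie algebroid together with the requirement that $t\mapsto 1\otimes t$ preserve brackets force the bracket to be
\[
[p\otimes s,\,q\otimes t]=pq\otimes[s,t]+p\,\alpha(s)(q)\otimes t-q\,\alpha(t)(p)\otimes s
\]
(with the evident Koszul signs in the graded setting). Since there is no freedom left in these formulas, this already establishes the uniqueness asserted in the lemma.

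The one genuinely computational point is that the bracket descends to $P\otimes_A T$. Testing it on the relation $pa\otimes s=p\otimes as$ for $a\in A$, one expands using the Leibniz rule $[as,t]=a[s,t]-\rho(t)(a)\,s$ in $T$, the $A$-linearity $\alpha(as)=a\,\alpha(s)$, and the derivation property of $\alpha(t)$; the two potentially obstructing terms cancel exactly because of the compatibility $\alpha(t)(\phi(a))=\phi(\rho(t)(a))$ supplied by the commuting square. Antisymmetry is built into the formula, and the Jacobi identity follows from the Jacobi identity in $T$, the Leibniz rule in $T$, and the identity $\alpha([s,t])=[\alpha(s),\alpha(t)]$ expressing that $\alpha$ is a Lie homomorphism; one checks along the way that $\tilde\alpha$ is a Lie homomorphism and that the Leibniz rule $[u,qv]=q[u,v]+\tilde\alpha(u)(q)\,v$ holds by construction. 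In the dg case the differential is a derivation of all these operations automatically, since every map used is a degree-preserving chain map. I expect the sign bookkeeping in the Jacobi identity to be the only place demanding care; the underlying algebra is routine.

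Finally, the pair consisting of $\phi:A\to P$ and $j:T\to P\otimes_A T$, $t\mapsto 1\otimes t$, is a morphism in $\Alg_{\LA}$: the map $j$ is $A$-linear by construction, it is compatible with the anchors since $\tilde\alpha(1\otimes t)=\alpha(t)$ and the square commutes, and it preserves brackets because $[1\otimes s,1\otimes t]=1\otimes[s,t]$, the correction terms vanishing as $\alpha(s)(1)=\alpha(t)(1)=0$. This yields both the $P$-Lie algebroid structure on $P\otimes_A T$ and the desired morphism $(A,T)\to(P,P\otimes_A T)$.
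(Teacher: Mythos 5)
Your proof is correct and supplies exactly the standard base-change formulas (anchor $p\otimes t\mapsto p\,\alpha(t)$, bracket with the two Leibniz correction terms) whose well-definedness hinges on the commuting square; the paper's own proof is literally the single word ``Straightforward,'' so you have simply written out the verification the authors leave to the reader. No gaps.
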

\begin{proof}
Straightforward.
\end{proof}

The lemma identifies $T_P$ with $P\otimes_AT$ and, in particular, defines a
Lie algebra map $S_A(T[-1])\rTo S_P(T_P[-1])$. It is, obviously,
a quasiisomorphism.

By Lemma~\ref{lem:inner} the differential in $S_A(T[-1])$ is also given
by the formula $d(x)=-\ad_{f\partial_e}$.

\subsection{Calculation}

Denote $\fg=S_A(T[-1])[1]$. One has
\begin{equation}
\begin{array}{llll}
&\fg^{-1} & = & A, \\
&\fg^0 & = & \bigoplus_{i=1}^n A\partial_i,\\
&\fg^1 & = & A\partial_e\oplus\bigoplus_{i,j} A\partial_i\wedge\partial_j,\\
&\fg^2 & = & \bigoplus_i A\partial_e\wedge\partial_i\oplus
\bigoplus_{i,j,k}\partial_i\wedge\partial_j\wedge\partial_k.
\end{array}
\end{equation}
Let $(R,\fm)$ be a local artinial $k$-algebra with the maximal ideal $\fm$.

\

Let $w=p\partial_e+S\in\fm\otimes\fg^1$ with
$S\in\bigoplus \fm\otimes A\partial_i\wedge\partial_j$.

One has $dw+\frac{1}{2}[w,w]=dS+[p\partial_e,S]+
\frac{1}{2}[S,S]$. The first two summands are divisible by
$\partial_e$ whereas the third simmand is not. Thus, $w$ satisfies Maurer-Cartan
equation iff
\begin{equation}
\begin{cases}
dS+[p\partial_e,S]& = 0\\
[S,S] & =  0
\end{cases}
\end{equation}
or (taking into account that $[f\partial_e,S]=0$ iff $[f,S]=0$)
\begin{equation}\label{eq:gen-solution}
\begin{cases}
[f-p,S]& = 0\\
[S,S] & =  0.
\end{cases}
\end{equation}

Let $T_A=\Der(A,A)=\oplus A\partial_i$.

Note that the commutative algebra $S_A(T_A[-1])$ endowed with the differential
$d=\ad_{f}$ identifies with the Koszul complex of $A$ constructed
on the sequence $(\partial_1f,\ldots,\partial_nf)$.

From now on we assume that $f$ is an isolated singularity, that is that
$\partial_if$ form a regular sequence. This implies that
$S_A(T[-1]),\ad_{f})$
is acyclic. Moreover, for any artinian local $(R,\fm)$ and any
$p\in\fm\otimes A$ the complex $(R\otimes S_A(T[-1]),\ad_{f-p})$
is also acyclic as a deformation of acyclic complex.
Therefore, $[f-p,S]= 0$ if and only if there exists a trivector
field $T$ on $A$ such that $S=[f-p,T]$.

This proves the following result.

\begin{prp}
Let $f\in A=k[x_1,\ldots,x_n]$ define an isolated hypersurface singularity.
A solution of Maurer-Cartan equation for a noncommutative unfolding
is given by a pair $(p,T)$ where $p\in\fm\otimes A$ and
$T\in\fm\otimes\wedge^3_A(T_A)$ satisfying the condition
\begin{equation}
[[f-p,T],[f-p,T]]=0.
\end{equation}
\end{prp}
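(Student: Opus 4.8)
The plan is to read the proposition off the Maurer--Cartan analysis already carried out, the only substantial input being the acyclicity of the Koszul complex attached to the regular sequence $(\partial_1 f,\ldots,\partial_n f)$ and of its deformations. Recall that a noncommutative unfolding is a Maurer--Cartan element $w=p\partial_e+S$ in $\fm\otimes\fg^1$, and that the Maurer--Cartan equation was reduced in (\ref{eq:gen-solution}) to the system
\begin{equation*}
[f-p,S]=0,\qquad [S,S]=0.
\end{equation*}
So the whole point is to resolve the first equation in the form $S=[f-p,T]$ for a trivector field $T$, after which the second equation becomes verbatim the asserted condition $[[f-p,T],[f-p,T]]=0$.

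First I would read the first equation as a cycle condition. By Lemma~\ref{lem:inner} (which gives $dS=-[f\partial_e,S]$ and hence, as above, turns $dS+[p\partial_e,S]=0$ into $[f-p,S]=0$) the equation says that the bivector $S$ is a cycle for the differential $\ad_{f-p}$ on $S_A(T_A[-1])$; for $p=0$ this differential is exactly contraction with $df=\sum_i(\partial_i f)\,dx_i$, i.e. the Koszul differential on $(\partial_1 f,\ldots,\partial_n f)$. Since $f$ is an isolated singularity this sequence is regular, so the Koszul complex is exact in every positive polyvector degree; in particular a bivector cycle is the boundary of a trivector, which gives $S=[f,T]$ when $p=0$.

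To treat general $p\in\fm\otimes A$ I would argue by nilpotence. As $R$ is artinian local we have $\fm^N=0$, so the $\fm$-adic filtration of $R\otimes_k S_A(T_A[-1])$ is finite; the correction term $\ad_p$ strictly raises this filtration, so the associated graded differential is $\ad_f$, whose homology vanishes in positive degrees. Because the filtration is finite the resulting spectral sequence converges and forces $(R\otimes_k S_A(T_A[-1]),\ad_{f-p})$ to be exact in positive degrees as well; as all the terms are free over $R$ and $S$ lies in $\fm\otimes\wedge^2_A(T_A)$, the trivector $T$ may be chosen in $\fm\otimes\wedge^3_A(T_A)$. I expect this deformation-preserves-acyclicity step to be the only real obstacle; the rest is formal.

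Finally I would close the loop in both directions. Substituting $S=[f-p,T]$ into $[S,S]=0$ produces exactly $[[f-p,T],[f-p,T]]=0$, so every solution gives such a pair. Conversely, for any $(p,T)$ satisfying this condition one has $\ad_{f-p}^2=0$ --- since $[f-p,f-p]=0$ and by the graded Jacobi identity --- whence $S:=[f-p,T]$ automatically solves $[f-p,S]=\ad_{f-p}^2(T)=0$, while $[S,S]=0$ is the hypothesis; thus $w=p\partial_e+S$ is a genuine Maurer--Cartan element. This exhibits the noncommutative unfoldings as precisely the pairs $(p,T)$ described (the trivector $T$ being determined only up to $\ad_{f-p}$-cycles), completing the proof.
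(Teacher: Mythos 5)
Your proposal is correct and follows essentially the same route as the paper: reduce the Maurer--Cartan equation to the system $[f-p,S]=0$, $[S,S]=0$, identify $(S_A(T_A[-1]),\ad_{f-p})$ with a deformation of the Koszul complex on the regular sequence $(\partial_1 f,\ldots,\partial_n f)$, and use its exactness at bivectors to write $S=[f-p,T]$. The only differences are that you spell out the ``deformation of an acyclic complex'' step via the finite $\fm$-adic filtration and you verify the converse direction explicitly via $\ad_{f-p}^2=0$, both of which the paper leaves implicit.
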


\subsection{Quasiclassical data for NC unfoldings}

Recall that quasiclassical datum is defined as deformations of $B$-algebra
$A$ over $k[h]/(h^2)$ extendable to $k[h]/(h^3)$.

Deformations over $k[h]/(h^2)$ are described by the first cohomology of $\fg$.
Cocycles are given by pairs $(ph,Sh)$ with 
$p\in A,\ S\in\oplus A\partial_i\wedge\partial_j$ satisfying the condition
$[f,S]=0$, pairs $(p_1h,S_1h)$ and $(p_2h,S_2h)$ being homologous iff $S_1=S_2$
and $p_1-p_2\in(\partial_1f,\ldots,\partial_nf)$.

Maurer-Cartan solutions over $k[h]/(h^3)$ are described by pair $(p,S)$
where $p=p_1h+p_2h^2$ and $S=S_1h+S_2h^2$ satisfying (\ref{eq:gen-solution}).
This imposes two extra conditions on $(p_1,S_1)$:
\begin{itemize}
\item[1.] $[S_1,S_1]=0.$
\item[2.] There exists $S_2$ such that $[p_1,S_1]=[f,S_2]$.
\end{itemize}
Note that the second condition is equivalent to the condition
$[f,[p_1,S_1]]=0$ which is always fulfilled as $[f,S_1]=0$ and $[f,p_1]=0$.

Choose a vector subspace $W$ in $k[x_1,\ldots,x_n]$ such that 
$$ k[x_1,\ldots,x_n]=W\oplus (\partial_1,\ldots,\partial_n).$$
We have proven 
\begin{prp}\label{prp:QC}
Quasiclassical data for NC unfolding of an isolated hypersurface singularity
$f\in k[x_1,\ldots,x_n]$ are given by pairs $(p,S)$
where 
\begin{itemize}
\item[1.] $p\in W$.
\item[2.] $S$ is a Poisson bivector field satisfying $[f,S]=0$.
\end{itemize}
\end{prp}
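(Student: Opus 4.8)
The plan is to read the quasiclassical data directly off the deformation theory of the $B$-algebra $A$, which by the preceding discussion is controlled by the dg Lie algebra $\fg=S_A(T[-1])[1]$ with inner differential $d=-\ad_{f\partial_e}$. By definition a quasiclassical datum is a first-order deformation, i.e. a class in $H^1(\fg)$, whose primary obstruction in $H^2(\fg)$ vanishes. So I would first describe $H^1(\fg)$, and then analyse exactly when such a class extends over $k[h]/(h^3)$.

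First I would compute $H^1(\fg)$. Writing a degree-one element as $w_1=p\partial_e+S$ with $p\in A$ and $S\in\bigoplus A\partial_i\wedge\partial_j$, the relation $d(p\partial_e)=0$ together with $d(S)=-[f\partial_e,S]$ reduces the cocycle condition to $[f\partial_e,S]=0$, equivalently $[f,S]=0$. The coboundaries come from $\fg^0=\bigoplus A\partial_i$, and a direct computation gives $d(\sum a_i\partial_i)=(\sum a_i\,\partial_if)\,\partial_e$; hence the image lies entirely in the $A\partial_e$ summand, equals $(\partial_1f,\dots,\partial_nf)\,\partial_e$, and leaves the bivector $S$ untouched. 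Therefore a class in $H^1(\fg)$ is a pair $(p,S)$ with $[f,S]=0$, where $S$ is a class invariant and $p$ is well defined modulo $(\partial_1f,\dots,\partial_nf)$, i.e. $p\in W$.

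Next I would impose extendability to $k[h]/(h^3)$, which amounts to solving $d(w_2)+\tfrac{1}{2}[w_1,w_1]=0$ for some $w_2=p_2\partial_e+S_2$. Splitting $\fg^2$ into its $\partial_e$-free part $\bigoplus A\partial_i\wedge\partial_j\wedge\partial_k$ and its $\partial_e$-divisible part $\bigoplus A\partial_e\wedge\partial_i$, exactly as in the Maurer--Cartan analysis, I observe that $d(w_2)=-[f\partial_e,S_2]$ is $\partial_e$-divisible while $\tfrac{1}{2}[w_1,w_1]=[p\partial_e,S]+\tfrac{1}{2}[S,S]$ has trivector part $\tfrac{1}{2}[S,S]$. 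The $\partial_e$-free component thus forces $[S,S]=0$, the Poisson condition; the $\partial_e$-divisible component reads $[f\partial_e,S_2]=[p\partial_e,S]$, that is $[f,S_2]=[p,S]$, which demands that the vector field $[p,S]$ lie in $\operatorname{im}\bigl([f,-]\colon\bigoplus A\partial_i\wedge\partial_j\to\bigoplus A\partial_i\bigr)$.

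The main obstacle is this last point, and here the isolated-singularity hypothesis is essential. Since $(\partial_1f,\dots,\partial_nf)$ is a regular sequence, the complex $(S_A(T_A[-1]),\ad_f)$ is the Koszul complex on this sequence and is exact in all positive polyvector degrees; in degree one this says precisely that a vector field lies in $\operatorname{im}[f,-]$ if and only if it is $\ad_f$-closed. Applying this to the vector field $[p,S]$ and using the graded Jacobi identity, $[f,[p,S]]=[[f,p],S]\pm[p,[f,S]]=0$ because $[f,p]=0$ (two functions) and $[f,S]=0$ (cocycle condition), so $[p,S]=[f,S_2]$ is always solvable. Consequently the only genuine second-order constraint is $[S,S]=0$, and combining this with the first-order description produces exactly the pairs $(p,S)$ with $p\in W$ and $S$ a Poisson bivector field satisfying $[f,S]=0$, as asserted.
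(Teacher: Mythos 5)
Your argument is correct and follows essentially the same route as the paper: the same dg Lie algebra $\fg=S_A(T[-1])[1]$ with inner differential $-\ad_{f\partial_e}$, the same splitting of $\fg^1$ and $\fg^2$ into $\partial_e$-divisible and $\partial_e$-free parts to extract the conditions $[S,S]=0$ and $[p,S]=[f,S_2]$, and the same use of the Koszul-complex exactness (from the regular sequence $\partial_1f,\dots,\partial_nf$) plus the Jacobi identity to show the second condition is automatic. No substantive differences to report.
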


\subsection{Quantization}

We doubt that any quasiclassical datum can be quantized in general.
This is, however, true for $n=3$ (this case includes the classical ADE
singularities) as shown the following
\begin{lem}Let $A=k[x_1,x_2,x_3]$. Any bivector field $S$ satisfying
$[f,S]=0$ is Poisson.
\end{lem}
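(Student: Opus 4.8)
The plan is to exploit the special feature of dimension three: over $A=k[x_1,x_2,x_3]$ the module of bivector fields $\wedge^2_A T_A$ is isomorphic to the module of $1$-forms via contraction with the volume element $\mu=dx_1\wedge dx_2\wedge dx_3$. Concretely, to $S=\sum_{i<j}S^{ij}\partial_i\wedge\partial_j$ I would attach the $1$-form $\alpha=\iota_S\mu$, whose coordinate vector is $\vec\alpha=(S^{23},-S^{13},S^{12})$. Under this classical dictionary the two Schouten-bracket conditions become vector-calculus statements: the Hamiltonian field $[f,S]=\{f,-\}$ equals $\vec\alpha\times\nabla f$, while the Jacobiator $[S,S]$ is, up to a nonzero constant, the trivector $(\vec\alpha\cdot\operatorname{curl}\vec\alpha)\,\partial_1\wedge\partial_2\wedge\partial_3$. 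Hence
$$[f,S]=0\iff df\wedge\alpha=0,\qquad [S,S]=0\iff\alpha\wedge d\alpha=0,$$
and the lemma reduces to the implication $df\wedge\alpha=0\Rightarrow\alpha\wedge d\alpha=0$.

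First I would use that $f$ is an isolated singularity to upgrade proportionality of $\alpha$ and $df$ to an honest factorization $\alpha=\lambda\,df$ with $\lambda\in A$. In coordinates, $df\wedge\alpha=0$ is the system $\alpha_i\,\partial_jf=\alpha_j\,\partial_if$ for all $i,j$. Since $\partial_1f,\partial_2f,\partial_3f$ form a regular sequence in the UFD $A$, they are pairwise coprime: a common irreducible factor of two of them would force their common zero locus to have positive dimension, contradicting the isolatedness of the singularity. From $\alpha_1\partial_2f=\alpha_2\partial_1f$ and $\gcd(\partial_1f,\partial_2f)=1$ I get $\partial_1f\mid\alpha_1$; writing $\alpha_1=\lambda\,\partial_1f$ and cancelling the non-zero-divisor $\partial_1f$ in the remaining equations yields $\alpha_2=\lambda\,\partial_2f$ and $\alpha_3=\lambda\,\partial_3f$. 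Thus $\alpha=\lambda\,df$.

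The conclusion is then a one-line computation. Since $d(\lambda\,df)=d\lambda\wedge df$, one has
$$\alpha\wedge d\alpha=\lambda\,df\wedge d\lambda\wedge df=0,$$
the final wedge vanishing because $df$ occurs twice. Therefore $[S,S]=0$, i.e.\ $S$ is Poisson.

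I expect the main obstacle to be the middle step, the passage $df\wedge\alpha=0\Rightarrow\alpha=\lambda\,df$: this is precisely where both the dimension being three and the isolated-singularity hypothesis are essential, the argument resting on the regularity (hence pairwise coprimality) of the partial derivatives. The translation into the $1$-form picture and the final wedge computation are routine once this factorization is in hand; the only bookkeeping I would still need to carry out is to pin down the signs and the normalizing constant in the two equivalences above, which do not affect the vanishing statements.
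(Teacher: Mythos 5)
Your overall architecture is sound and is in fact the paper's own proof rewritten in the dual language of forms: under the identification $\wedge^2_AT_A\cong\Omega^1_A$ given by $\iota_{(-)}\mu$, your factorization $\alpha=\lambda\,df$ is exactly the paper's step $S=[f,T]=dT$ with $T=\lambda\,\partial_1\wedge\partial_2\wedge\partial_3$, and your computation $\alpha\wedge d\alpha=\lambda\,df\wedge d\lambda\wedge df=0$ is the paper's $[S,S]=[dT,dT]=d[T,dT]=0$ (with $[T,dT]$ a four-vector, hence zero in three variables). The two translation statements and the final wedge computation are all correct.

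The gap is in your justification of the factorization. You derive $\alpha=\lambda\,df$ from the claim that $\partial_1f,\partial_2f,\partial_3f$ are pairwise coprime, arguing that a common irreducible factor $g$ of, say, $\partial_1f$ and $\partial_2f$ would force ``their common zero locus'' to be positive-dimensional. But the isolatedness hypothesis only controls the common zero locus of \emph{all three} partials; $V(\partial_1f,\partial_2f)\supseteq V(g)$ is indeed a surface, yet $V(g)\cap V(\partial_3f)$ can be empty, leaving the critical locus finite. Concretely, $f=z^2(x^2+y^2)+\tfrac12(z-1)^2$ has $\partial_xf=2xz^2$ and $\partial_yf=2yz^2$ sharing the factor $z^2$, while its critical locus is the single point $(0,0,1)$ and its Milnor number is finite; so pairwise coprimality is simply not a consequence of isolatedness, and your deduction $\partial_1f\mid\alpha_1$ collapses. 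The correct justification of $df\wedge\alpha=0\Rightarrow\alpha=\lambda\,df$ is the one the paper has already put in place in the preceding subsection: the Koszul complex $(S_A(T_A[-1]),\ad_f)$ on $(\partial_1f,\partial_2f,\partial_3f)$ is acyclic in positive degrees because the Jacobian ideal has depth $3$, and exactness at $\wedge^2T_A$ (equivalently at $\Omega^1$ in your picture) is precisely the statement that every $\ad_f$-closed bivector is $[f,T]$ for a trivector $T$, i.e.\ that every $\alpha$ with $df\wedge\alpha=0$ is a multiple of $df$. With that substitution your proof closes up and coincides with the paper's.
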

\begin{proof} Recall that the differential in the Koszul complex
$(S_A(T_A[-1]),d)$ is given by the formula $dx=[f,x]$.
Let $S=[f,T]=dT$. One has
$$ [S,S]=[dT,dT]=d[T,dT]=0$$
as $[T,dT]$ is a four-vector.
\end{proof}

\begin{crl}\label{crl:unfolding}
Any quasiclassical datum for NC unfolding of a surface isolated 
singularity can be quantized.
\end{crl}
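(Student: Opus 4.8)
The plan is to reduce the Corollary to producing, from a given quasiclassical datum, a genuine Maurer--Cartan element of $\fg=S_A(T[-1])[1]$ over $k[[h]]$, and then to invoke Theorem~\ref{thm:formality} to read this solution as an honest NC unfolding. By Proposition~\ref{prp:QC} the datum is a pair $(p_1,S_1)$ with $p_1\in W$ and $S_1$ a bivector field satisfying $[f,S_1]=0$; the preceding Lemma makes the Poisson condition $[S_1,S_1]=0$ automatic for $A=k[x_1,x_2,x_3]$, so no further infinitesimal hypothesis is needed. Since $f$ is isolated, the $\partial_if$ form a regular sequence and the Koszul complex $(S_A(T_A[-1]),\ad_f)$ is acyclic in positive vector degree; hence the cocycle $S_1$ is a boundary, and I would fix a trivector field $T_1\in\wedge^3_A(T_A)$ with $S_1=[f,T_1]$.

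The heart of the argument is an explicit polynomial lift. I would set
$$ p=p_1h,\qquad T=T_1h,\qquad S=[f-p,T]. $$
Since $p$ and $T$ are linear in $h$, the bivector $S=[f-p_1h,\,T_1h]=S_1h-[p_1,T_1]h^2$ is a polynomial of degree two in $h$, so $(p,S)$ is defined over $k[[h]]$ and satisfies $S\equiv S_1h\pmod{h^2}$; thus it reduces to the prescribed datum. It then remains to verify the Maurer--Cartan system~(\ref{eq:gen-solution}).

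Both equations should hold for structural reasons. For the first, $f-p$ is a function, so its self-bracket vanishes, $\ad_{f-p}$ squares to zero, and $[f-p,S]=[f-p,[f-p,T]]=0$ at once. For the second I would use that $\ad_{f-p}$ is a derivation of the bracket, together with $\ad_{f-p}T=S$ and the first equation $\ad_{f-p}S=0$, to obtain $[S,S]=[f-p,[T,S]]$. Here $[T,S]$ is the Schouten bracket of a three-vector with a two-vector, hence a four-vector field on $k[x_1,x_2,x_3]$; as $\wedge^4_A(T_A)=0$ it vanishes, forcing $[S,S]=0$. This is precisely where the surface hypothesis $n=3$ enters, and it is the step I regard as the crux: for general $n$ the four-vector $[T,S]$ need not vanish, the obstruction to $[S,S]=0$ persists, and one would not expect the datum to quantize. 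With both equations confirmed, $(p,S)$ is a Maurer--Cartan solution over $k[[h]]$, which Theorem~\ref{thm:formality} identifies with a noncommutative unfolding of $f$ over $k[[h]]$ extending the given quasiclassical datum---the assertion of the Corollary.
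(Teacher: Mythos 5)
Your proposal is correct and follows essentially the same route as the paper: it combines Proposition~\ref{prp:QC} with the acyclicity of the Koszul complex to write $S_1=[f,T_1]$, and then uses exactly the paper's key observation (its final Lemma) that $[dT,dT]=d[T,dT]=0$ because $[T,dT]$ is a four-vector field, which vanishes since $\wedge^4_A(T_A)=0$ for $n=3$. The explicit lift $p=p_1h$, $T=T_1h$, $S=[f-p,T]$ is just an unpacking of the paper's Proposition on Maurer--Cartan solutions, so no new idea is introduced and no step is missing.
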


\end{document}